\documentclass[a4paper, 11pt, twoside, notitlepage]{amsart}

\usepackage{amsmath,amscd}
\usepackage{amssymb}
\usepackage{amsthm}
\usepackage{comment}
\usepackage{graphicx, xcolor}

\usepackage{mathrsfs}
\usepackage[ocgcolorlinks, linkcolor=blue]{hyperref}
\usepackage[normalem]{ulem}

\newcommand\redsout{\bgroup\markoverwith{\textcolor{red}{\rule[0.5ex]{2pt}{0.4pt}}}\ULon}

\textheight=222mm \textwidth=167mm
\topmargin=-0.1cm
\oddsidemargin= -0.0cm
\evensidemargin=-0.0cm


\usepackage{bm}
\usepackage{bbm}
\usepackage{url}

\usepackage[most]{tcolorbox}

\usepackage[utf8]{inputenc}
\usepackage{mathtools,amssymb}
\usepackage{esint}
\usepackage{tikz}
\usepackage{dsfont}
\usepackage{relsize}
\usepackage{url}
\urlstyle{same}
\usepackage{xcolor}
\usepackage{graphicx}
\usepackage{mathrsfs}
\usepackage[shortlabels]{enumitem}
\usepackage{lineno}
\usepackage{amsmath}
\usepackage{enumitem}
\usepackage{amsthm} 
\usepackage{verbatim}
\usepackage{dsfont}
\numberwithin{equation}{section}

\allowdisplaybreaks

\mathtoolsset{showonlyrefs}

\graphicspath{{images/}}

\newtheorem{theorem}{Theorem}[section]
\newtheorem{lemma}[theorem]{Lemma}
\newtheorem{definition}[theorem]{Definition}

\newtheorem{proposition}[theorem]{Proposition}

\newtheorem{assumption}{Assumption}
\newtheorem{remark}[theorem]{Remark}


\usepackage{url}
\definecolor{mycolor}{rgb}{0.122, 0.435, 0.698}
\definecolor{aliceblue}{rgb}{0.94, 0.97, 1.0}

\title[Entanglement principle for the nonlocal parabolic operators]{Entanglement principle and fractional Calder\'on problem for nonlocal parabolic operators}


\author[R.-Y. Lai]{Ru-Yu Lai}
\address{School of Mathematics, University of Minnesota, Minneapolis, MN 55455, USA}
\curraddr{}
\email{rylai@umn.edu}

\author[Y.-H. Lin]{Yi-Hsuan Lin}
\address{Department of Applied Mathematics, National Yang Ming Chiao Tung University, Hsinchu, Taiwan \& Fakult\"at f\"ur Mathematik, University of Duisburg-Essen, Essen, Germany}
\curraddr{}
\email{yihsuanlin3@gmail.com}

\author[L. Yan]{Lili Yan}
\address{Department of Mathematics,  North Carolina State University,, Raleigh, NC 27695, USA}
\curraddr{}
\email{lyan6@ncsu.edu}

\keywords{Nonlocal parabolic operator, entanglement principle, Calderón problem, Runge approximation.}
\subjclass[2020]{Primary: 35R30. Secondary: 35S10, 35K99}

\newcommand{\C}{{\mathbb C}}
\newcommand{\R}{{\mathbb R}}
\newcommand{\Z}{{\mathbb Z}}
\newcommand{\N}{{\mathbb N}}

\newcommand{\eps}{\epsilon}

\newcommand {\p} {\partial}

\newcommand{\LC}{\left(}
\newcommand{\RC}{\right)}



\newcommand{\norm}[1]{\lVert #1 \rVert}
\newcommand{\abs}[1]{\left\lvert #1 \right\rvert}
\DeclareMathOperator{\supp}{supp} 
\DeclareMathOperator{\dist}{dist} 


\begin{document}

	\maketitle
	
	\begin{abstract}
		We examine inverse problems for the variable-coefficient nonlocal parabolic operator $(\partial_t - \Delta_g)^s$, where $0 < s < 1$. This article makes two primary contributions. First, we introduce a novel entanglement principle for these operators under suitable smoothness conditions. Second, we prove that lower-order perturbations can be uniquely determined from the associated Dirichlet-to-Neumann map using this principle. However, due to insufficient solution regularity, direct application of the entanglement principle to the inverse problem is not feasible. To address this, we derive a modified entanglement principle, enabling the effective resolution of related inverse problems.
	\end{abstract}
	
	\tableofcontents
	
	\section{Introduction}\label{sec: introduction}
	
	Inverse problems for space-fractional equations have garnered considerable attention in recent years, not only due to their distinctive mathematical features, but also because of their wide range of applications in physics, biology, finance, and related fields.  
	
	A pioneering breakthrough in this direction is the resolution of the Calderón problem for the fractional Schrödinger equation (see \cite{GSU20}), which concerns the recovery of an unknown bounded potential from exterior measurements. One of the key contributions in \cite{GSU20} is the establishment of the \emph{unique continuation property} (UCP) for the fractional Laplacian operator $(-\Delta)^s$, $0<s<1$, which states that  
	\begin{equation}
		u = (-\Delta)^s u = 0 \quad \text{in a nonempty open subset of } \R^n  \implies  u \equiv 0 \text{ in } \R^n.
	\end{equation}
	This fundamental property leads to the \emph{Runge approximation property}, which asserts that any $L^2$-function on a given open set can be approximated by solutions of the fractional Schr\"odinger equation $\big((-\Delta)^s + q\big)u = 0$. Either UCP or Runge approximation can then be used to show that a lower-order perturbation $q$ can be uniquely determined from exterior data, under suitable regularity conditions on $q$.  
	
	Following this seminal work \cite{GSU20}, a substantial body of research has emerged on inverse problems for various space-fractional models. For example, simultaneous recovery results for multiple parameters were obtained in \cite{CLL2017simultaneously, cekic2020calderon}, while the determination of bounded potentials for anisotropic nonlocal Schr\"odinger equations was investigated in \cite{GLX}. These problems remain open in the local case $s=1$ for dimensions $n \geq 3$, suggesting that nonlocality appears to provide significant advantages in addressing such inverse problems. For further developments on both linear and nonlinear nonlocal inverse problems in various settings, we refer the reader to the following articles \cite{harrach2017nonlocal-monotonicity, harrach2020monotonicity, LL2020inverse, GRSU20, CMRU20, RS20, ruland2018exponential, GRSU20, LLR2019calder, LZ2023unique, GU2021calder} and the references therein.

	In particular, owing to the close connection between nonlocal and local settings, interior coefficients can be recovered either via reductions based on the Caffarelli--Silvestre extension (see, e.g., \cite{CGRU2023reduction,ruland2023revisiting, LLU2023calder, LZ2024approximation}) or by employing heat semigroup methods on closed Riemannian manifolds (see, e.g., \cite{feizmohammadi2021fractional, Fei24_TAMS, FKU24,lin2024fractional}). Both approaches share a similar philosophy: they transfer certain nonlocal information to its local counterpart, or to the associated heat equation. In contrast, a local-to-nonlocal reduction was studied in \cite{LNZ24} for the classical Schr\"odinger equation in transversal anisotropic geometry.  
	
	More recently, the study of perturbation by nonlocal operators has gathered interest because of their intrinsic mathematical properties and potential applications. The corresponding unique continuation property, referred to as the \emph{entanglement principle}, was established in \cite{FKU24} for the fractional Laplace--Beltrami operator and in \cite{FL24} for the fractional Laplacian. Its remarkable capacity to disentangle contributions from each fractional power in a nonlocal operator is expected to inspire further developments in nonlocal inverse problems. For a comprehensive survey of this rapidly evolving field, we refer the reader to the recent monograph \cite{LL25_Integro}.

	\subsection{Mathematical formulations and main results}

	In this work, we focus on inverse problems associated with fractional nonlocal parabolic operators. Let's begin by defining the parabolic operator
	\begin{equation}\label{para op.}
		\mathcal{H}_g:= \p_t -\Delta_g,
	\end{equation} 
	where the Laplace--Beltrami operator $\Delta_g$ is defined as follows:
	$$
	\Delta_g={1\over \sqrt{|g|}} \sum^n_{j,k=1}{\p\over \p x_j} \LC\sqrt{|g|}g^{jk}{\p\over \p x_k}\RC.
	$$  
	Here the metric $g=\LC g_{jk}(x)\RC_{1\leq j,k\leq n}\in C^\infty (\R^n;\R^{n\times n})$ satisfies the ellipticity condition, i.e., there exists a constant $\lambda \in (0,1)$ such that
	\begin{equation}\label{ellipticity}
		\lambda |\xi|^2 \leq \sum_{j,k=1}^n g_{jk}(x)\xi_j \xi_k \leq \lambda^{-1}|\xi|^2,
	\end{equation}
	for any $x\in \R^n$ and for any $\xi=(\xi_1,\ldots, \xi_n) \in \R^n$. Also, $|g|$ stands for the absolute value of the determinant of $g$, and $g^{jk}$ are the components of the inverse of $g=\LC g_{jk}(x)\RC_{1\leq j,k\leq n}$.

	The fractional powers of the parabolic operator $\mathcal{H}_g$ is defined by 
	$$
	\mathcal{H}_g^su : = (\p_t -\Delta_g)^s u,\quad s>0,
	$$
	of a function $u=u(x,t):\R^{n+1}\rightarrow \R$, $n\in \N$. These space-time nonlocal operators are found in various applications, including continuous-time random walks and mathematical biology. In contrast with the operators like $\p_tu+ (-\Delta_g)^s$, the space and time variables in $\mathcal{H}^s_g$ are coupled together with order $s$ in time $t$ and order $2s$ in space $x$. 
	
	To define such operators, when $g=\mathrm{I}_n$  
	(the $n\times n $ identity matrix), we simply write $\mathcal{H}:= \p_t -\Delta$ as a heat operator, where $\Delta$ denotes the classical Laplace operator. 
	In this case, since the coefficients of $\mathcal{H}$ are constants, the operator $\mathcal{H}^s$ can be defined via the Fourier transform as follows:
	\begin{align}
		\label{eq:op}
		\widehat{\LC \mathcal{H}^su\RC}(\xi,\rho)=(i\rho+|\xi|^2)^s\widehat{u}(\xi,\rho), \quad \text{ for }(\xi,\rho)\in \R^n\times\R,
	\end{align}
	for $u\in \mathcal{S}(\R^{n+1})$, which is the Schwartz space of smooth, rapidly decreasing functions in $\R^n\times\R$. Here $\widehat{f}$ denotes the Fourier transform of $f$ in both space and time variables $(x,t)$.
	For the latter purpose, we also use $\mathcal{F}_x(f)$ and $\mathcal{F}_t(f)$ to denote the Fourier transform of $f$ with respect to $x$ and $t$, respectively. However, using the method of the Fourier transform to define the operator $\mathcal{H}^s_g$ with variable coefficients is not directly applicable. Fortunately, another more flexible approach  through the parabolic language of semigroups is available to handle $\mathcal{H}^s_g$, see for instance,   
	\cite{ST17_para} and also the discussion in Section~\ref{sec: prel}.
	In particular, it can also provide the explicit formula for $\mathcal{H}^s_g$ at the space-time point $(x,t)$.

	\subsubsection{Previous work on nonlocal parabolic inverse problems}
	Let us revisit 
	inverse problems related to nonlocal parabolic operators.
	We consider the Calder\'{o}n-type problem for the equation $\LC \mathcal{H}_g^s +V \RC  u  =0 $ with a \emph{time-dependent} potential $V=V(x,t)$. The goal is to recover $V$ from exterior measurements $\Lambda_V$ defined below. 
	
	Let $u_f$ be the solution to the problem
	\begin{equation}\label{equ: nonlocal para 1}
		\begin{cases}
			\LC \mathcal{H}_g^s +V \RC  u  =0 &\text{ in }\Omega_T, \\
			u=f &\text{ in }(\Omega_e)_T, \\
			u=0 &\text{ in }\R^n \times \{t\leq -T\},
		\end{cases}
	\end{equation}
	where we use the standing notations
	\begin{equation}
		\Omega_e : = \R^n \setminus \overline{\Omega}, 
	\end{equation}
	and 
	\begin{equation}
		A_T := A\times (-T,T),
	\end{equation}
	for any subset $A\subset \R^n$ and a fixed real number $T>0$ throughout this work. Notice that our initial condition in \eqref{equ: nonlocal para 1} is needed due to the natural nonlocality of the operator $\mathcal{H}^s_g$.
	
	To study inverse problems for \eqref{equ: nonlocal para 1}, we require an additional eigenvalue condition: Suppose that $\{0\}$ is not a Dirichlet eigenvalue of \eqref{equ: nonlocal para 1} in the sense that 
	\begin{equation}\label{eigenvalue condition}
		\begin{cases}
			\text{If $u\in \mathbf{H}^s(\R^{n+1})$ solves } \begin{cases}
				\big(  \mathcal{H}^s_g +V \big) u =0 &\text{ in }\Omega_T, \\
				u=0& \text{ in }(\Omega_e)_T,\\
				u=0 &\text{ in }\R^n \times \{t\leq -T\},
			\end{cases}\\
			\text{then }u\equiv 0 \text{ in }\R^n_T.
		\end{cases}
	\end{equation}
	Here the space $\mathbf{H}^s(\R^{n+1})$ is defined in Section~\ref{sec: prel}. It is well-known that for all bounded potentials $V\geq 0$, \eqref{eigenvalue condition} is satisfied automatically.
	Since the condition \eqref{eigenvalue condition} ensures unique solvability of the forward problem for \eqref{equ: nonlocal para 1} (see Section~\ref{sec: prel}), we can formally define the \emph{Dirichlet-to-Neumann} (DN) map $\Lambda_V$ of \eqref{equ: nonlocal para 1} by
	\begin{equation}
		\Lambda_V : \widetilde{\mathbf{H}}^s((\Omega_e)_T) \to \mathbf{H}^{-s}((\Omega_e)_T), \quad f\mapsto \left. \mathcal{H}^s_g u_f \right|_{(\Omega_e)_T}.
	\end{equation}
	By using the DN map $\Lambda_V$, it has been shown in \cite{LLR2019calder, BS2024calderon} that the time-dependent potential $V$ can be recovered uniquely.
	
	We would also like to point out that in the works \cite{LLU2022para, LLU2023calder}, the authors show the nonlocal-to-local reduction with respect to both the heat semigroup and the Caffarelli-Silvestre extension approaches. More precisely, they demonstrated that the nonlocal DN map for nonlocal parabolic equations can determine their local DN map for local parabolic equations.

	\subsubsection{Main results}
	
	In this article, we extend our study to the models with the fractional poly-parabolic operators $\sum_{k=1}^N b_k \mathcal{H}^{\alpha_k}_g$ for constants $b_k$. 
	As mentioned above, several previous works have been devoted to studying inverse problems for the operator $\mathcal{H}^s_g$ with local perturbation. However, when nonlocal operators perturb it, the complicated nonlocal interactions contributed from different terms make the problem challenging to solve. To decouple their entangled effect, we introduce a novel approach in Theorem~\ref{thm: ent} and use it to study the related inverse problems in Theorem~\ref{thm: IP_ent}.

	Now we state the setting of the problem to be investigated and the key approach. Given an integer $N\geq 2$, let $\mathcal{O}\subset \R^n$ be a nonempty open set. Suppose that $u_k\in C^\infty(\R^{n+1})$, for $k=1,\ldots, N$. Inspired by the papers \cite{FKU24, FL24} which address the entanglement issue for fractional elliptic operators, we are interested in the following question: 
	\begin{enumerate}[\textbf{(IP-1)}]
		\item  \label{question} \textit{Given $N\geq 2$.
			Suppose that $\{ u_k \}_{k=1}^N \subset C^\infty(\R^{n+1})$ satisfies 
			\begin{equation}
				\left. u_1 \right|_{\mathcal{O}_T}  = \ldots =  \left. u_N \right|_{\mathcal{O}_T} = \bigg(\sum_{k=1}^N b_k \mathcal{H}^{\alpha_k}_g u_k \bigg) \bigg|_{\mathcal{O}_T}=0, 
			\end{equation}
			where $\{ b_k\} \subset \C\setminus \{0\}$, and $\{\alpha_k\}\subset (0,\infty)\setminus \N$ are given real numbers.
			Does there hold $u_k\equiv 0 $ in $\R^{n}_T$ for all $k=1,\ldots, N$?}
	\end{enumerate} 
	The notation $\mathbb{N}$ denotes the set of all positive integers, and $\mathbb{Z}$ denotes the set of all integers. Note that when $N=1$, for a single operator $\mathcal{H}^s_g$, this property is referred to as the \emph{unique continuation property} (UCP) and has been studied in \cite{LLR2019calder, BS2024calderon}.

	\begin{remark}
		Restrictions on the exponents $\{\alpha_k\}_{k=1}^N$ are necessary, since for general choices of $\{\alpha_k\}_{k=1}^N$, Question \ref{question} does not always hold. To illustrate this, consider the case $N=2$. Let $\alpha \in (0,\infty)\setminus \N$ and set $\alpha_1 = \alpha + m$, $\alpha_2 = \alpha$ for some $m \in \N$. Given a nonempty open subset $\mathcal{O}\subset \R^n$, let $u_1 \in C^\infty(\R^n_T)$ be a nontrivial function such that $u_1=0$ in $\mathcal{O}_T$.  
		Since $\mathcal{H}_g^m$ is a local operator for $m \in \N$, by defining $u_2 := -\mathcal{H}_g^m u_1$, it follows that $u_2=0$ in $\mathcal{O}_T$ and, moreover, the equation  
		\begin{equation}
			\mathcal{H}_g^{\alpha_1}u_1 + \mathcal{H}_g^{\alpha_2}u_2 = 0 \quad \text{in }\mathcal{O}_T.
		\end{equation}
		However, $u_1$ and $u_2$ are not trivial functions.
	\end{remark}
	
	The above counterexample shows that no such principle exists for local operators, and it leads to the following optimal condition for our entanglement principle:
	
	\begin{assumption}\label{exponent condition} 
		We assume $\{\alpha_k\}_{k=1}^N \subset (0,\infty)\setminus \N$ with $\alpha_1<\alpha_2<\ldots<\alpha_N$, and that they satisfy 
		\begin{equation}\label{exponent condition for ent prin}
			\alpha_k-\alpha_j \notin \Z \quad \text{for all } j\neq k,
		\end{equation}
		which is required to ensure a positive answer to Question \ref{question}. 
		This nonresonance condition guarantees that the fractional powers remain genuinely distinct and cannot be reduced to local operators by integer shifts.
	\end{assumption}
	
	Our first main result, which decouples the entangled effects, is stated as follows.

	\begin{theorem}[Entanglement principle]\label{thm: ent}
		Let $\mathcal{O}\subset \R^n$ be a nonempty open set for $n\geq 2$. Let $N\in\mathbb{N}$, $T>0$, and $\{\alpha_k\}_{k=1}^N \subset (0,\infty)\setminus \N $ satisfy \textbf{Assumption~\ref{exponent condition}}. Suppose $g \in C^\infty(\R^n;\R^{n\times n})$ satisfy \eqref{ellipticity}. Assume that $\{u_k\}_{k=1}^{N}\subset C^\infty((-\infty,T);\mathcal{S}(\R^n))$ 
		satisfy the following estimates: given any $\beta=(\beta_0,\beta_1,\ldots,\beta_n) \in \LC \N \cup \{0\} \RC^{n+1}$, there exist positive constants $C_\beta$ and $\delta$ such that   
		\begin{equation}\label{solution exponential decay time}
			\begin{split}
				\big|D^\beta_{x,t} u_k(x,t)\big|\leq 
				C_\beta|\varphi_\beta(x)|e^{\delta t},\quad \quad |\beta|\ge 0
				\quad &\text{for }  (x,t)\in \R^n\times \{t\leq -T\},
			\end{split}
		\end{equation}
		for $k=1,\ldots, N$, where $D_{x,t}^\beta = \frac{\p^{|\beta|}}{\p_t^{\beta_0}\p x_1^{\beta_1}\ldots\, \p x_n^{\beta_n}}$ and $\varphi_\beta \in \mathcal{S}(\R^n)$. 
		If
		\begin{equation}\label{condition_entanglement_u}
			\begin{split}
				\left. u_1\right|_{\mathcal O_T}=\ldots=u_N|_{\mathcal O_T}=0 \quad \text{and} \quad  \bigg(  \sum_{k=1}^N  b_k\mathcal{H}_g^{\alpha_k}u_k\bigg) \bigg|_{\mathcal O_T}=0 
			\end{split}
		\end{equation}
		hold for some $\{b_k\}_{k=1}^N\subset \C\setminus \{0\}$, then $u_k\equiv 0$ in $\R^n_T$ for all $k=1,\ldots,N$.
	\end{theorem}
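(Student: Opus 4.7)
The plan is to split the entangled identity $\sum_{k=1}^N b_k \mathcal{H}_g^{\alpha_k} u_k = 0$ on $\mathcal{O}_T$ into $N$ separate vanishing conditions $\mathcal{H}_g^{\alpha_k} u_k|_{\mathcal{O}_T} = 0$, one for each $k$, and then invoke the unique continuation property (UCP) for a single fractional parabolic operator from \cite{LLR2019calder, BS2024calderon} to conclude $u_k \equiv 0$ in $\R^n_T$. For each $k$, write $\alpha_k = m_k + s_k$ with $m_k \in \N\cup\{0\}$ and $s_k \in (0,1)$; by \textbf{Assumption~\ref{exponent condition}} the fractional parts $\{s_k\}_{k=1}^N$ are pairwise distinct. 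Since $\mathcal{H}_g^{m_k}$ is a local differential operator of order $2m_k$, $v_k := \mathcal{H}_g^{m_k} u_k$ also vanishes on $\mathcal{O}_T$, and $\mathcal{H}_g^{\alpha_k} u_k = \mathcal{H}_g^{s_k} v_k$. Applying the parabolic Stinga--Torrea semigroup representation
$$
\mathcal{H}_g^{s_k} v_k(x,t) = \frac{1}{\Gamma(-s_k)} \int_0^\infty \bigl[P^g_\tau v_k(x,t) - v_k(x,t)\bigr] \frac{d\tau}{\tau^{1+s_k}},
$$
where $P^g_\tau := e^{-\tau \mathcal{H}_g}$ denotes the heat-type semigroup generated by $\mathcal{H}_g$, converts each term of the sum into an integral expression. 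The decay hypothesis \eqref{solution exponential decay time}, combined with $v_k|_{\mathcal{O}_T} = 0$, ensures that the integrand decays faster than any power as $\tau \to 0^+$ and exponentially as $\tau \to \infty$ whenever $(x,t) \in \mathcal{O}_T$, so all the resulting integrals are absolutely convergent.

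The hypothesis then reads
$$
\sum_{k=1}^N \frac{b_k}{\Gamma(-s_k)} \int_0^\infty P^g_\tau v_k(x,t)\, \frac{d\tau}{\tau^{1+s_k}} = 0, \quad (x,t) \in \mathcal{O}_T,
$$
a single identity mixing $N$ integrals with distinct Mellin exponents $-s_k$. To isolate each contribution, I would exploit the analyticity of the Mellin transforms $z \mapsto \int_0^\infty P^g_\tau v_k(x,t)\, \tau^{z-1}\, d\tau$, which are entire functions of $z \in \C$ on account of the decay at both endpoints of $\tau$. The distinctness of $\{s_k\}$ generates singular contributions of $N$ distinct orders, and this, together with the freedom of $(x,t)$ over the open set $\mathcal{O}_T$ (and, if needed, repeated application of the operators $\mathcal{H}_g^{s_j}$ to build a Vandermonde-type system in the powers $\tau^{-s_k}$), should allow extraction of the $k$-th term individually, producing $\mathcal{H}_g^{\alpha_k} u_k|_{\mathcal{O}_T} = 0$ for every $k$.

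Once this decoupling is in place, we have both $u_k|_{\mathcal{O}_T} = 0$ and $\mathcal{H}_g^{\alpha_k} u_k|_{\mathcal{O}_T} = 0$ for each $k$, so the single-operator UCP immediately yields $u_k \equiv 0$ in $\R^n_T$. The main obstacle lies in the decoupling step: unlike the purely elliptic entanglement principles of \cite{FKU24, FL24}, the parabolic setting couples space and time through the semigroup $P^g_\tau$, and one must carefully track both the variable-coefficient spatial geometry of $\Delta_g$ and the time-shift structure of $P^g_\tau$. Making the Mellin/Vandermonde separation rigorous under the smoothness and decay assumed on $u_k$ is precisely where the novelty of the entanglement principle for nonlocal parabolic operators resides.
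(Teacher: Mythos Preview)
Your overall architecture matches the paper's: reduce to fractional parts $s_k\in(0,1)$ via $v_k:=\mathcal{H}_g^{m_k}u_k$ (the paper absorbs $b_k$ into $v_k$ too), pass to the Balakrishnan semigroup formula, verify the integrand decays at $\tau\to 0^+$ and $\tau\to\infty$, decouple, then invoke single-operator UCP. The genuine gap is your decoupling mechanism. A Vandermonde system built from repeated application of $\mathcal{H}_g^{s_j}$ cannot be set up: these operators are \emph{nonlocal}, so from $\sum_k b_k\mathcal{H}_g^{s_k}v_k=0$ on $\mathcal{O}_T$ you cannot conclude that $\mathcal{H}_g^{s_j}$ of this sum vanishes on $\mathcal{O}_T$. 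And Mellin analyticity by itself gives only one linear relation among the $N$ entire Mellin transforms (at the points $z=-s_k$), nowhere near enough to force each to vanish; your remark that ``distinct $s_k$ generate singular contributions of distinct orders'' is not compatible with those transforms being entire.

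What the paper does instead is iterate with the \emph{local} operators $\mathcal{H}_g^m$, $m\in\N$: since $v_k|_{\mathcal{O}_T}=0$ implies $\mathcal{H}_g^m v_k|_{\mathcal{O}_T}=0$ and $\mathcal{H}_g^m$ commutes with $\mathcal{H}_g^{s_k}$, one gets $\sum_k b_k\mathcal{H}_g^{s_k}(\mathcal{H}_g^m v_k)=0$ on $\mathcal{O}_T$ for every $m$. Rewriting via the semigroup and integrating by parts $m$ times in $\tau$ (boundary terms vanish by exactly the decay you established) yields the infinite family
\[
\sum_{k=1}^N \Gamma(m+1+s_k)\int_0^\infty f_k(\tau)\,\tau^{-m}\,d\tau=0,\qquad m=0,1,2,\ldots,
\]
with $f_k(\tau)=c_k\,(e^{-\tau\mathcal{H}_g}v_k)(x,t_0)\,\tau^{-1-s_k}$. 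This is precisely the hypothesis of \cite[Proposition~3.1]{FKU24}, which forces each $f_k\equiv 0$, hence $e^{-\tau\mathcal{H}_g}v_k=0$ for all $\tau>0$ on a smaller set $\omega_T$, and then $v_k=\mathcal{H}_g^{s_k}v_k=0$ on $\mathcal{O}_T$. The missing idea is thus to use \emph{integer} powers of $\mathcal{H}_g$ (local, hence compatible with the support constraint) to manufacture infinitely many moment conditions. One further correction to your endgame: after decoupling you obtain $v_k\equiv 0$ in $\R^n_T$, i.e.\ $\mathcal{H}_g^{m_k}u_k=0$ with $u_k|_{\mathcal{O}_T}=0$; to conclude $u_k\equiv 0$ you need classical parabolic UCP for the local operator $\mathcal{H}_g^{m_k}$, since the nonlocal UCP in \cite{LLR2019calder,BS2024calderon} is only stated for exponents in $(0,1)$.
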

	Theorem \ref{thm: ent} extends the entanglement principle established in \cite{FKU24, FL24} for elliptic operators to the parabolic setting. Specifically, \cite{FKU24} proved the validity of the entanglement principle for the Laplace--Beltrami operator in the compact case, while \cite{FL24} addressed the non-compact case for the classical Laplace operator.
	
	We also would like to emphasize that the decay condition \eqref{solution exponential decay time} will not impose any additional assumption in the study of inverse problem in Theorem~\ref{thm: IP_ent}. Moreover, when $N=1$, \eqref{solution exponential decay time} can be removed in Theorem~\ref{thm: ent}, see Remark~\ref{remark:entangle section 3} for more detailed discussions.
	
	\begin{remark}
		In our case, owing to the extra time variable in the fractional parabolic equation, the exponents $\{\alpha_k\}_{k=1}^N$ only need to satisfy \textbf{Assumption~\ref{exponent condition}}. This is different from the fractional Laplacian considered in \cite{FL24}. Indeed, \cite{FL24} requires additional assumptions on the odd dimensions to remove the resonance effect, which is only a technical reason.
	\end{remark}
	
	As an application of the above entanglement principle, we study the unique determination of a time-dependent potential $V$ in fractional poly-parabolic operators defined by
	\begin{equation}
		\label{equ: P_V def}
		P_V  :=\sum_{k=1}^N b_k\mathcal{H}_g^{s_k} +V ,
	\end{equation}
	where $V=V(x,t) \in L^\infty(\Omega_T)$. Here $0<s_1<\ldots<s_N<1$ and {$b_k>0$} for all $1\le k\le N$ (the positivity of $b_k$ is needed for the forward problem).  
	We consider the initial exterior value problem 
	\begin{equation}\label{equ: nonlocal para 2}
		\begin{cases}
			P_V  u  =0 &\text{ in }\Omega_T, \\
			u=f &\text{ in }(\Omega_e)_T, \\
			u=0 &\text{ in }\R^n \times \{t\leq -T\}.
		\end{cases}
	\end{equation}
	Assume that 
	$$
	\text{ $\{0\}$ is not a Dirichlet eigenvalue of $P_V$}.
	$$
	Theorem~\ref{thm: well-posedness} guarantees the well-posedness of the problem \eqref{equ: nonlocal para 2} and allows us to define the corresponding exterior DN map  
	$$
	\Lambda_V:\mathbf{H}^{s_N}((\Omega_e)_T) \to \mathbf{H}^{-s_N}((\Omega_e)_T), \quad f\mapsto \sum_{k=1}^N b_k\mathcal{H}_g^{s_k} u_f \bigg|_{(\Omega_e)_T} ,
	$$
	where $u_f$ is the unique solution of \eqref{equ: nonlocal para 2}. A rigorous definition of the DN map can be found in Section~\ref{sec: DN map}.
	
	We are interested in the following question. 
	\smallskip
	\begin{enumerate}[\textbf{(IP-2)}]
		\item \label{question: entanglement} 	\textit{Can one determine the potential $V\in L^\infty(\Omega_T)$ using the exterior DN map  
			$\Lambda_V$ of \eqref{equ: nonlocal para 2}?}
	\end{enumerate}
	\smallskip

	The second main result of the paper answers this question.  	
	\begin{theorem}[Global uniqueness]\label{thm: IP_ent} 
		Given $N\in \N$, $\{b_k\}_{k=1}^N\subset (0,\infty)$, and $0<s_1<\ldots<s_N<1$, let $\Omega\subset \R^n$ be a bounded Lipschitz domain for $n\geq 2$, and $W_1,\, W_2 \Subset \Omega_e$ be nonempty open subsets. Suppose $g \in C^\infty(\R^n;\R^{n\times n})$ satisfy \eqref{ellipticity}. Let $V_j=V_j(x,t) \in L^\infty(\Omega_T)$, and $\Lambda_{V_j}$ be the DN map of 
		\begin{equation}\label{equ: nonlocal para 3}
			\begin{cases}
				\big( \sum_{k=1}^N b_k\mathcal{H}_g^{s_k} + V_j \big)  u =0 &\text{ in }\Omega_T, \\
				u=f &\text{ in }(\Omega_e)_T, \\
				u=0 &\text{ in }\R^n \times \{t\leq -T\},
			\end{cases}
		\end{equation}
		for $j=1,2$. Then the relation
		\begin{equation}
			\begin{split}
				\Lambda_{V_1} f \big|_{(W_2)_T} = 	\Lambda_{V_2} f \big|_{(W_2)_T} , \quad \text{for any }f\in C^\infty_c((W_1)_T)
			\end{split}
		\end{equation}
		implies that $V_1=V_2$ in $\Omega_T$.
	\end{theorem}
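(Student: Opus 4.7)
The plan is to combine a standard Alessandrini-type integral identity with a Runge approximation whose key input is the entanglement principle of Theorem~\ref{thm: ent} (suitably upgraded to lower regularity). First I would set up the integral identity. Let $u_1$ solve~\eqref{equ: nonlocal para 3} with potential $V_1$ and exterior data $f_1\in C^\infty_c((W_1)_T)$, and let $u_2$ solve the formal adjoint problem (backward in time, since the adjoint of $\mathcal{H}_g=\partial_t-\Delta_g$ is $-\partial_t-\Delta_g$) with potential $V_2$ and exterior data $f_2\in C^\infty_c((W_2)_T)$. Using the self-adjointness of $\Delta_g$, the semigroup representation of $\mathcal{H}_g^{s_k}$, and the well-posedness result in Theorem~\ref{thm: well-posedness}, one obtains an identity of the form
\begin{equation}
\langle (\Lambda_{V_1}-\Lambda_{V_2})f_1,\, f_2\rangle_{(\Omega_e)_T} \;=\; \int_{\Omega_T}(V_1-V_2)\,u_1\,u_2\,dx\,dt.
\end{equation}
Since $f_2$ is supported in $(W_2)_T$ and $\Lambda_{V_1}f_1=\Lambda_{V_2}f_1$ on $(W_2)_T$ by hypothesis, the left-hand side vanishes, leaving the orthogonality relation $\int_{\Omega_T}(V_1-V_2)u_1u_2=0$ for every admissible pair of solutions.

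Next I would upgrade this orthogonality to $V_1=V_2$ via a Runge-type density result: as $f_1,f_2$ range over $C^\infty_c$-data on $(W_1)_T$ and $(W_2)_T$, the restrictions of the products $u_1u_2$ to $\Omega_T$ are dense in $L^1(\Omega_T)$. By the Hahn--Banach theorem, this reduces to showing that any $F\in L^\infty(\Omega_T)$ with $\int_{\Omega_T}Fu_1u_2\,dx\,dt=0$ for all such pairs must vanish. Holding $u_2$ fixed and varying $u_1$, I would introduce an auxiliary function $w$ solving the inhomogeneous adjoint equation with source $Fu_2$ supported in $\Omega_T$; duality turns the orthogonality into the vanishing of $w$ (and of $\sum_k b_k\mathcal{H}_g^{s_k}w$) on the exterior region $(W_1)_T$. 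Applying the entanglement principle to the family of functions extracted from $w$ (the various fractional-power components) then forces $w\equiv 0$ in $\R^n_T$, hence $Fu_2\equiv 0$, and swapping the roles of $u_1$ and $u_2$ yields $F\equiv 0$.

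The main obstacle, already signaled in the abstract, is the regularity mismatch between the hypotheses of Theorem~\ref{thm: ent} and the actual solutions that arise in the inverse problem. Theorem~\ref{thm: ent} requires $u_k\in C^\infty((-\infty,T);\mathcal{S}(\R^n))$ with the exponential-decay derivative bounds~\eqref{solution exponential decay time}, whereas the forward solutions of~\eqref{equ: nonlocal para 3} only live in $\mathbf{H}^{s_N}(\R^{n+1})$ and the auxiliary $w$ is at best in a dual Sobolev space. I therefore expect the technical heart of the argument to be a \emph{modified entanglement principle} that replaces the Schwartz/exponential-decay hypothesis by the natural Sobolev regularity of DN-problem solutions, most likely via a space-time cutoff-and-mollification scheme combined with a preliminary Runge approximation transferring smoothness from Schwartz test objects onto the rough $u_k$. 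Once this modification is in hand, the integral-identity and Hahn--Banach steps above close the proof of Theorem~\ref{thm: IP_ent} in a standard way.
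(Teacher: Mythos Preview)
Your proposal is correct and follows the paper's architecture: an integral identity (Lemma~\ref{Lem Integral identitiy}), a Runge approximation driven by the entanglement principle (Lemma~\ref{lemma: Runge approximation}), and a low-regularity version of that principle (Proposition~\ref{prop: entangle in IP}). Two implementation details differ slightly from your sketch. In the Runge step the paper proves $L^2(\Omega_T)$-density of each solution family separately and, in the final argument, approximates an arbitrary $\phi$ by forward solutions and the constant $1$ by adjoint solutions; moreover, the entanglement principle is invoked with all $u_k$ equal to a \emph{single} auxiliary function, not to separate ``fractional-power components'' of $w$. For the modified entanglement principle, no preliminary Runge transfer of smoothness is used: the mechanism is time-only mollification combined with the initial vanishing $u_k=0$ on $\R^n\times\{t\le -T\}$, which makes $e^{-\tau\mathcal{H}_g}u_{k,\varepsilon}(x,t_0)=0$ for all sufficiently large $\tau$ and thereby directly replaces the exponential-decay hypothesis~\eqref{solution exponential decay time} needed to apply Proposition~\ref{prop: key of entangle}.
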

	
	Note that Theorem~\ref{thm: IP_ent} extends the earlier results in \cite{LLR2019calder, BS2024calderon} (the works studied a global uniqueness for the case $N=1$) to multiple terms of nonlocal parabolic operators.

	\subsection{Organization of the article.}
	In Section~\ref{sec: prel}, we recall several functional spaces and introduce nonlocal parabolic operators through the semigroup theory, together with well-posedness results for initial exterior value problems and rigorous definitions of the DN maps. Section~\ref{sec: entanglement} is devoted to establishing the entanglement principle for nonlocal parabolic operators. Finally, in Section~\ref{sec: IP}, we prove the remaining main result regarding the global uniqueness of the potential in Theorem~\ref{thm: IP_ent}.

	
	\section{Preliminaries}\label{sec: prel}
	In this section, we introduce the function spaces used in this paper and recall several useful properties of the nonlocal parabolic operator $\mathcal{H}^s_g$.

	\subsection{Function spaces}\label{function space}
	We start by recalling the (fractional) Sobolev spaces. 
	Given $a\in \R$, $H^{a}(\R^{n})=W^{a,2}(\R^{n})$
	is the $L^2$-based fractional Sobolev space (see \cite{DNPV12} for example) with the norm 
	\[
	\|u\|_{H^{a}(\R^{n})}:=\left\Vert \mathcal{F}_x^{-1}\big\{\left\langle \xi\right\rangle ^{a}\mathcal{F}_xu\big\}\right\Vert _{L^{2}(\R^{n})},
	\]
	where $\left\langle \xi\right\rangle =(1+|\xi|^{2})^{\frac{1}{2}}$.
	Let $\mathcal O \subset \R^n$ be an open set. We define 
	\begin{align*}
		H^{a}(\mathcal{O}) & :=\{u|_{\mathcal{O}}:\,u\in H^{a}(\R^{n})\},\\
		\widetilde{H}^{a}(\mathcal{O}) & :=\text{closure of \ensuremath{C_{c}^{\infty}(\mathcal{O})} in \ensuremath{H^{a}(\R^{n})}}.
	\end{align*}
	The space $H^{a}(\mathcal{O})$ is complete under the norm
	\[
	\|u\|_{H^{a}(\mathcal{O})}:=\inf\left\{ \|v\|_{H^{a}(\R^{n})}:\ v\in H^{a}(\R^{n})\mbox{ and }v|_{\mathcal{O}}=u\right\} .
	\]

	Given an open set $B\subset \R^{n+1}$, if $f=f(x,t)$ and $g=g(x,t)$ are $L^2$ functions in $B$, we denote the $L^2$ inner product by 
	\begin{align*}
		( f,g )_B:=\int_{B} f\overline{g}\ dxdt.
	\end{align*}
	For the nonlocal space-time operator $\mathcal{H}^s=(\partial_t-\Delta)^s$, we will work on the following Lions-Magenes Sobolev spaces $H^{s,s/2}(\R^n\times \R)$ 
	(see \cite[Chapter 4.2]{LM12} and in particular equation (2.3) there). To simplify this notation and to emphasize the coupling between time and space variables, hereinafter we abbreviate it by $\mathbf{H}^s(\R^{n+1})$. More precisely, for $a\in \R$, we consider 
	\begin{align*}
		\mathbf{H}^a (\R^{n+1}):=\left\{u\in L^2(\R^{n+1}): \|u\|_{\mathbf{H}^a(\R^{n+1})}<\infty \right\} = H^{a,a/2}(\R^n\times \R),
	\end{align*}
	where 
	\begin{align}
		\label{eq:norm}
		\|u\|_{\mathbf{H}^{a}(\R^{n+1})}^2 = \int_{\R^{n+1}} (1+|i\rho+|\xi|^2|)^{a} |\widehat{u}(\xi,\rho)|^2 d\rho d\xi <\infty.
	\end{align}
	Note that $|i\rho +|\xi|^2|=\left(|\rho|^2+|\xi|^4\right)^{1/2}$ and $2^{-1/2}(|\rho|+|\xi|^2)\leq (|\rho|^2+|\xi|^4)^{1/2}\leq |\rho|+|\xi|^2$. 
	As the ``classical'' fractional Sobolev spaces (see \cite{ML-strongly-elliptic-systems}), the following notations follow naturally by treating spacetime together.  
	For an open set $O$ and a closed set $F$ in $\R^{n+1}$, $n\geq 1$, we define
	\begin{equation}
		\begin{split}
			\mathbf{H}^a(O) &:= \left\{u|_O:\ u\in \mathbf{H}^a(\R^{n+1})\right\},\\
			\widetilde{\mathbf{H}}^a(O) &:= \text{closure of $C^\infty_c(O)$ in $\mathbf{H}^a(\R^{n+1})$},\\ 
			\mathbf{H}^a_F= \mathbf{H}^a_F (\R^{n+1})&:=\left\{u\in \mathbf{H}^a(\R^{n+1}): \ \mathrm{supp}(u)\subset F \right\}.
		\end{split}
	\end{equation}
	Also, $C^\infty_c(\R^{n+1})$ is dense in $\mathbf{H}^a(\R^{n+1})$ under the norm $\|\cdot \|_{\mathbf{H}^a(\R^{n+1})}$. 
	Moreover,
	$$
	(\mathbf{H}^a(O))^* = \widetilde{\mathbf{H}}^{-a}(O),\quad  (\widetilde{\mathbf{H}}^a(O))^* = \mathbf{H}^{-a}(O),\text{ for } a\in\R.
	$$

	\subsection{The nonlocal parabolic operator}
	
	The definition for the nonlocal parabolic operator $\mathcal{H}^s_g$ was given in \cite{Balakrishnan_1960} (also see \cite{BDLCS2021harnack,BS2024calderon}) for $0<s<1$. It is known that heat operator $\p_t-\Delta_g$ in $\R^n\times \R$ possesses a globally defined fundamental solution $p_\tau (x,y)$, which satisfies    
	\[
	e^{\tau\Delta_g} 1(x)=\int_{\R^n} p_\tau(x,y)\,dV_g(y)=1, \text{ for every }x\in \R^n \text{ and }\tau>0,
	\]
	where $e^{\tau\Delta_g}$ stands for the heat semigroup associated to the operator $\Delta_g$.  
	For $u\in \mathcal{S}(\R^n)$, we have 
	\[
	e^{\tau\Delta_g} u(x)=\int_{\R^n} p_\tau(x,y)u(y)\, dV_g(y) , \text{ for every }x\in \R^n \text{ and }\tau>0. 
	\]
	Here the Riemannian volume form $dV_g$ is given by $dV_g(y)=\sqrt{|g|}\, dy$. For the purpose of simplifying the notation, in what follows, we will only use $dy$ to represent $dV_g(y)$.  Meanwhile, the heat kernels $p_\tau(x,y)$ satisfies 
	\begin{align}\label{est-heat-kernel}
		C_1 \LC \frac{1}{4\pi \tau}\RC^{n/2}e^{-\frac{c_1\abs{x-y}^2}{4\tau}} \leq p_\tau(x,y)\leq C_2 \LC \frac{1}{4\pi \tau}\RC^{n/2}e^{-\frac{c_2\abs{x-y}^2}{4\tau}} ,
	\end{align}
	for some positive constants $c_1,c_2,C_1$ and $C_2$ and for all $x,y\in \R^n$, $\tau>0$. 
	
	Since $\p_t$ and $-\Delta_g$ are commutable, we have $e^{-\tau \mathcal{H}_g}= e^{\tau \Delta_g} \circ e^{-\tau \p_t}$, and the evolution semigroup is given by 
	\begin{equation}\label{e-semni gp}
		\begin{split}
			e^{-\tau \mathcal{H}_g}u(x,t)&: = e^{\tau \Delta_g} u(x,t-\tau)=\int_{\R^n} p_\tau (x,y)u(y,t-\tau)\, dy, \quad \tau>0,\quad \text{ for }u\in \mathcal{S}(\R^{n+1}),
		\end{split}
	\end{equation}
	where $p_\tau(x,y)$ is the heat kernel given as before, and $\mathcal{S}(\R^{n+1})$ stands for the Schwartz space. Meanwhile, it is held that 
	\begin{equation}
		e^{-\tau \mathcal{H}_g}1(x,t)=\int_{\R^n}p_\tau (x,y)\, dy=1, \text{ for every }(x,t)\in \R^{n+1}, \text{ and }\tau>0.
	\end{equation}
	Note that $\left\{e^{-\tau \mathcal{H}_g}  \right\}_{\tau \geq 0}$ is a strongly continuous contractive semigroup such that\footnote{The notation $\mathcal{O}(\tau)$ is the Bachmann–Landau notation.}
	$$
	\norm{e^{-\tau \mathcal{H}_g} u-u}_{L^2(\R^{n+1})}=\mathcal{O}(\tau), \text{ as }\tau \to 0.
	$$
	
	Let us first give the explicit formula of $\mathcal{H}^s_g$ via the heat semigroup. 
	
	\begin{definition}[Balakrishnan formula]
		Given $s\in (0,1)$ and $u\in\mathcal{S}(\R^{n+1})$, the nonlocal parabolic operator $\mathcal{H}^s_g$ can be defined as (see \cite[Section 2]{BS2024calderon}) 
		\begin{align}\label{H^s}
			\mathcal{H}_g^s u(x,t):=\frac{1}{\Gamma(-s)} \int_0^\infty \LC e^{-\tau \mathcal{H}_g}u(x,t)-u(x,t)\RC \frac{d\tau}{\tau^{1+s}}.
		\end{align}
	\end{definition}
	
	Note that $\mathcal{H}^s_g u\in L^2(\R^{n+1})$ for $u\in \mathcal{S}(\R^{n+1})$ by the functional calculus. Using \eqref{e-semni gp},  
	we can rewrite \eqref{H^s} as follows:
	\begin{equation}\label{H^s explicit}
		\mathcal{H}_g^s u (x,t)=\int_0^\infty \int_{\R^n} \LC u(y,t-\tau)-u(x,t)\RC \mathcal{K}_s(x,y,\tau)\, dy d\tau,
	\end{equation}
	where 
	\begin{equation}
		\mathcal{K}_s(x,y,\tau):= \frac{1}{\Gamma(-s)}\frac{p_\tau(x,y)}{\tau^{1+s}}.
	\end{equation}
	In particular, as $g=\mathrm{I}_n$, the kernel $\mathcal{K}_s(x,y,\tau)$ has an explicit representation formula
	\begin{equation}\label{integral kernel for H^s}
		\mathcal{K}_s(x,y,\tau)= \frac{1}{(4\pi)^{n/2}\Gamma(-s)}\frac{e^{-\frac{\abs{x-y}^2}{4\tau}}}{\tau^{n/2+1+s}},
	\end{equation}
	where we used the heat kernel for the heat operator $\p_\tau -\Delta$ precisely.

	\begin{remark}
		Recall that the fractional Laplace--Beltrami operator can also be defined in a similar way, which is 
		\begin{equation}\label{fractional Laplace-Beltrami}
			\begin{split}
				\LC -\Delta_g \RC^s v(x)&:= \frac{1}{\Gamma(-s)}\int_0^\infty \LC e^{\tau \Delta_g} v(x)-v(x) \RC \frac{d\tau}{\tau^{1+s}} \\
				&\ = \int_0^\infty \int_{\R^n} (v(y)-v(x) ) \mathcal{K}_s(x,y,\tau)\, dy d\tau.
			\end{split}
		\end{equation}
	\end{remark}

	Using the Fourier transform with respect to the time variable $t\in \R$, one can express $\mathcal{H}_g^s u$ in terms of the Fourier transform. In doing so, we first denote by ${E_\lambda}$ the spectral measure associated to $\mathcal{H}_g$, i.e.,
	\[
	\mathcal{H}_g = - \int_0^\infty \lambda dE_\lambda.
	\]
	We then observe that the heat semigroup $\left\{e^{\tau \Delta_g}\right\}_{\tau\geq 0}$ can be written by spectral measures as an identity of gamma functions \cite[Section 2]{BS2024calderon}:
	\begin{align}\label{gamma function}
		e^{\tau \Delta_g}=\int_0^\infty e^{-\lambda \tau}\, dE_{\lambda} \quad \text{ and }\quad \frac{1}{\Gamma(-s)}\int_0^\infty \frac{e^{-(\lambda+\mathsf{i}\sigma)\tau}-1}{\tau^{1+s}}\, d\tau=(\lambda+\mathsf{i}\sigma)^s,
	\end{align}
	for $\lambda>0$ and $\sigma\in \R$, where $\mathsf{i}=\sqrt{-1}$. Taking the Fourier transform in the time variable on \eqref{e-semni gp} yields 
	\begin{equation}\label{Fourier e Hg}
		\mathcal{F}_t(e^{-\tau \mathcal{H}_g} u)(x,\sigma) =  e^{-i\sigma\tau }e^{ \tau \Delta_g} (\mathcal{F}_t u(\cdot,\sigma))(x).
	\end{equation}
	Together with \eqref{gamma function}, this gives the Fourier analogue of the definition \eqref{H^s} as follows:

	\begin{equation}\label{Fourier t}
		\mathcal{F}_t(\mathcal{H}_g^s u)(\cdot,\sigma) = \int_0^\infty (\lambda + i\sigma)^s \, dE_\lambda(\mathcal{F}_t u(\cdot,\sigma)).
	\end{equation}
	
	Moreover, we define the adjoint operator $\mathcal{H}_{g,\ast}^s$ of $\mathcal{H}_{g}^s$ in terms of the spectral resolution in the following manner
	\[
	\mathcal{F}_t (\mathcal{H}_{g,\ast}^s u)(\cdot, \sigma) = \int_0^\infty (\lambda - i\sigma)^s \, dE_\lambda(\mathcal{F}_t u(\cdot, \sigma)), \quad \text{for } u \in \mathcal{S}(\R^{n+1}).
	\]
	We also recall the following property from \cite[Section 2]{BS2024calderon}
	\begin{equation}\label{ajoint property}
		\begin{split}
			\langle \mathcal{H}_g^s f, h \rangle
			&= \big\langle \mathcal{H}_g^{s/2} f, \mathcal{H}_{g,\ast}^{s/2} h \big\rangle
			= \big\langle f, \mathcal{H}_{g,\ast}^s h \big\rangle\\
			&= \int_{\R} \int_0^\infty (\lambda + i\sigma)^s \, d\langle E_\lambda \mathcal{F}_t f, \overline{\mathcal{F}_t h} \rangle(\cdot, \sigma) \, d\sigma, \quad \text{for } f,\,h \in \mathcal{S}(\R^{n+1}).
		\end{split}
	\end{equation}
	From the resolution of the parabolic version of the Kato square root problem in \cite{AEN2020} and interpolation type argument, $\mathbf{H}^s(\R^{n+1})$ is the completion of $\mathcal{S}(\R^{n+1})$ with respect to the following norm:
	\begin{equation}
		\label{equ: norm equiv}
		\left( \int_{\mathbb{R}} \int_0^{\infty} \left( \left(1 + |\lambda + i\sigma|^2 \right)^{s/2} \, d\|E_\lambda(\mathcal{F}_t u(\cdot, \sigma))\|^2 \right) \, d\sigma \right)^{1/2}, \quad s\in (0,1),\quad \text{for } u\in \mathcal{S}(\R^{n+1}).
	\end{equation}
	Therefore, we get from the Cauchy-Schwarz inequality that 
	\begin{equation}\label{equ: C-S for H^s}
		\big\langle \mathcal{H}_g^s f, h \big\rangle= \big\langle \mathcal{H}_g^{s/2} f, \mathcal{H}_{g,\ast}^{s/2} h\big\rangle 
		\le C \|f\|_{\mathbf{H}^s(\R^{n+1})} \|h\|_{\mathbf{H}^s(\R^{n+1})}
	\end{equation}
	for some constant $C>0$ independent of $f,\,h$. This leads to the mapping properties 
	$$\mathcal{H}^s_g: \mathbf{H}^s(\R^{n+1})\to \mathbf{H}^{-s}(\R^{n+1})\quad\text{ and }\quad \mathcal{H}^s_{g,\ast}: \mathbf{H}^s(\R^{n+1})\to \mathbf{H}^{-s}(\R^{n+1}).$$
	We refer to \cite[Section 2]{BS2024calderon} for related discussions.
	
	For general $\alpha\in (0,\infty)\setminus \mathbb{N}$, we write $\alpha=m+s$, where $m$ is the integer part of $\alpha$ and $s\in (0,1)$. Based on \cite[Chapter 5]{CarracedoSanz2001}, we can write 
	\begin{equation}\label{H^alpha}
		\mathcal{H}^\alpha_g =\mathcal{H}_g^{m+s}=\mathcal{H}^{s}_g \big( \mathcal{H}^m_g \big) = \mathcal{H}^{m}_g \big( \mathcal{H}^s_g \big),
	\end{equation}
	where $\mathcal{H}^m_g=\LC \p_t -\Delta_g\RC^m$ is a local differential operator.

	\subsection{The well-posedness}	
	In this section, we will show the unique existence of solutions to the initial exterior value problems \eqref{equ: nonlocal para 2} by adapting the arguments developed in \cite{LLR2019calder, BS2024calderon}, in which the well-posed problems \eqref{equ: nonlocal para 1} were proved for a constant and a variable coefficient, respectively.

	To show the problem \eqref{equ: nonlocal para 2} has a unique solution, we consider the following initial exterior value problem instead:
	\begin{equation}\label{equ: wellposedness entangle}
		\begin{cases}
			P_V  u  =F &\text{ in }\Omega_T, \\
			u=f &\text{ in }(\Omega_e)_T, \\
			u=0 &\text{ in }\R^n \times \{t\leq -T\},
		\end{cases}
	\end{equation}
	where $F\in (\mathbf{H}^{s_N}_{\overline{\Omega_T}})^*$, $f\in \mathbf{H}^{s_N}((\Omega_e)_T)$, and $P_V$ is defined in \eqref{equ: P_V def}.
	
	Let $0<s_1<\ldots < s_N<1$, and consider the sesquilinear form $B_V(\cdot,\cdot)$ on $\mathbf{H}^{s_N}(\R^{n+1})\times \mathbf{H}^{s_N}(\R^{n+1})$ defined by 
	\[
	B_V(u,w) := \sum_{k=1}^N b_k \big( \mathcal{H}^{s_k/2}_g u,  \mathcal{H}^{s_k/2}_{g,\ast} w\big)_{L^2(\R^{n+1})} + \LC Vu, w \RC_{L^2(\Omega_T)}.
	\]
	According to \cite{LLR2019calder,cekic2020calderon}, we need to study a time-localized problem. We denote the cut-off of a function $u(x,t)$ on the time variable $t$ by 
	\[
	u_T(t,x) : = u(t,x)\chi_{[-T,T]}(t),
	\]
	where $\chi_{[-T,T]}(t)$ is a characteristic function for $t\in\R$. Since the characteristic function is a multiplier in the Sobolev space $H^\gamma(\R)$ for $|\gamma|<{1\over 2}$, we have $u_T\in \mathbf{H}^s(\R^{n+1})$ when $u\in \mathbf{H}^s(\R^{n+1})$ for $0<s<1$, see \cite[Section 2]{LLR2019calder}. We are ready to define the weak solution for \eqref{equ: wellposedness entangle}.

	\begin{definition}
		Let $\Omega$ be a bounded open set in $\R^n$ and $T>0$. Assume $0<s_1<\ldots < s_N<1$, and $V\in L^\infty(\Omega_T)$ such that $0$ is not a Dirichlet eigenvalue of the problem \eqref{equ: wellposedness entangle}. Given $F\in (\mathbf{H}^{s_N}_{\overline{\Omega_T}})^*$ and $f\in \widetilde{\mathbf{H}}^{s_N}((\Omega_e)_T)$, we say that $u\in \mathbf{H}^{s_N}(\R^{n+1})$ is a weak solution of \eqref{equ: wellposedness entangle} if $v: = (u - f)_T\in \mathbf{H}^{s_N}_{\overline{\Omega_T}}$ and 
		\[
		B_V(u,w) = \langle F,w \rangle_{(\mathbf{H}^{s_N}_{\overline{\Omega_T}})^*\times\mathbf{H}^{s_N}_{\overline{\Omega_T}}},\quad \text{for any } w\in \mathbf{H}^{s_N}_{\overline{\Omega_T}}.
		\]
	\end{definition}
	
	\begin{theorem}[Well-posedness]\label{thm: well-posedness}
		Let $\Omega$ be a bounded open set in $\R^n$ and $T>0$. Let $N\in \N$ and $\{b_k\}_{k=1}^N\subset (0,\infty)$. Suppose $g \in C^\infty(\R^n;\R^{n\times n})$ satisfies \eqref{ellipticity}. Assume $0<s_1<\ldots <s_N<1$ and $V\in L^\infty(\Omega_T)$ such that $0$ is not a Dirichlet eigenvalue of the problem \eqref{equ: wellposedness entangle}. Given $F\in (\mathbf{H}^{s_N}_{\overline{\Omega_T}})^*$ and $f\in \widetilde{\mathbf{H}}^{s_N}((\Omega_e)_T)$, there exists a unique solution $u_T\in \mathbf{H}^{s_N}(\R^{n+1})$ to the problem \eqref{equ: wellposedness entangle} satisfying 
		\[
		\norm{u_T}_{\mathbf{H}^{s_N}(\R^{n+1})} \le C\big(\|F\|_{(\mathbf{H}^{s_N}_{\overline{\Omega_T}})^*}+ \|f\|_{\mathbf{H}^{s_N}((\Omega_e)_T)}\big)
		\] 
		for some constant $C>0$ independent of $F$, $f$, and $u$.
	\end{theorem}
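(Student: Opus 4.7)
The plan is to adapt the variational framework of \cite{LLR2019calder, BS2024calderon} (which handles the case $N=1$) to the multi-term operator $P_V$. The key structural observation is that the highest-order term $b_N\mathcal{H}_g^{s_N}$ fixes the natural energy space as $\mathbf{H}^{s_N}(\R^{n+1})$, while the lower-order terms $b_k\mathcal{H}_g^{s_k}$ for $k<N$ and the bounded potential $V$ act as controllable perturbations within this space. The problem then reduces to a G{\aa}rding inequality for the sesquilinear form $B_V(\cdot,\cdot)$ on $\mathbf{H}^{s_N}_{\overline{\Omega_T}}$ combined with a Fredholm alternative argument.

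First, I would reduce \eqref{equ: wellposedness entangle} to homogeneous exterior data. Extending $f\in \widetilde{\mathbf{H}}^{s_N}((\Omega_e)_T)$ by zero and setting $v:=(u-f)_T$, this object belongs to $\mathbf{H}^{s_N}_{\overline{\Omega_T}}$ because the time cut-off $\chi_{[-T,T]}$ is a bounded multiplier on $H^{\gamma}(\R)$ for $|\gamma|<1/2$ and only the time-derivative order $s_N/2<1/2$ enters the $\mathbf{H}^{s_N}$-norm. The weak formulation becomes
\[
B_V(v,w) = \langle F,w\rangle_{(\mathbf{H}^{s_N}_{\overline{\Omega_T}})^*\times \mathbf{H}^{s_N}_{\overline{\Omega_T}}} - B_V(f,w), \qquad w\in \mathbf{H}^{s_N}_{\overline{\Omega_T}},
\]
and by \eqref{equ: C-S for H^s} the right-hand side is a bounded antilinear functional on $\mathbf{H}^{s_N}_{\overline{\Omega_T}}$ with norm controlled by $\|F\|_{(\mathbf{H}^{s_N}_{\overline{\Omega_T}})^*}+\|f\|_{\mathbf{H}^{s_N}((\Omega_e)_T)}$.

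The core estimate is the coercivity of $B_V$. Using \eqref{ajoint property}, for each $k$ one has
\[
(\mathcal{H}_g^{s_k/2}v,\mathcal{H}_{g,\ast}^{s_k/2}v)_{L^2(\R^{n+1})} = \int_{\R}\!\int_0^\infty (\lambda+i\sigma)^{s_k}\,d\|E_\lambda\mathcal{F}_t v(\cdot,\sigma)\|^2\, d\sigma.
\]
Because $\mathcal{H}_g$ is non-negative, the principal branch satisfies $\mathrm{Re}\,(\lambda+i\sigma)^{s_k}\ge \cos(\pi s_k/2)(\lambda^2+\sigma^2)^{s_k/2}$ for $\lambda\ge 0$. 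Combined with $b_k>0$, the norm equivalence \eqref{equ: norm equiv}, and the trivial bound $|(Vv,v)_{L^2(\Omega_T)}|\le \|V\|_{L^\infty}\|v\|_{L^2(\Omega_T)}^2$, I obtain
\[
\mathrm{Re}\,B_V(v,v) \ge c\,\|v\|_{\mathbf{H}^{s_N}(\R^{n+1})}^2 - C\bigl(1+\|V\|_{L^\infty(\Omega_T)}\bigr)\|v\|_{L^2(\Omega_T)}^2
\]
for all $v\in \mathbf{H}^{s_N}_{\overline{\Omega_T}}$. Boundedness of $B_V$ on $\mathbf{H}^{s_N}_{\overline{\Omega_T}}\times\mathbf{H}^{s_N}_{\overline{\Omega_T}}$ follows from \eqref{equ: C-S for H^s} together with the embeddings $\mathbf{H}^{s_N}\hookrightarrow \mathbf{H}^{s_k}$ for $k\le N$. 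Since $\overline{\Omega_T}$ is bounded in $\R^{n+1}$, the inclusion $\mathbf{H}^{s_N}_{\overline{\Omega_T}}\hookrightarrow L^2(\Omega_T)$ is compact, so the displayed inequality is a genuine G{\aa}rding estimate.

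The Fredholm alternative then closes the argument: the bounded operator $\mathcal{A}_V:\mathbf{H}^{s_N}_{\overline{\Omega_T}}\to (\mathbf{H}^{s_N}_{\overline{\Omega_T}})^*$ induced by $B_V$ differs from a coercive invertible operator $\mathcal{A}_V+\mu J$ (for $\mu$ large, with $J$ the $L^2(\Omega_T)$-pairing, by Lax--Milgram) by a compact perturbation $\mu J$, hence is Fredholm of index zero. The assumption that $\{0\}$ is not a Dirichlet eigenvalue of $P_V$ is precisely injectivity of $\mathcal{A}_V$, which upgrades it to an isomorphism; setting $u_T := v+f$ then yields the claimed weak solution and stability estimate. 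The main obstacle is handling the non-self-adjoint pairing $(\mathcal{H}_g^{s_k/2}\cdot,\mathcal{H}_{g,\ast}^{s_k/2}\cdot)$: direct energy identities do not immediately yield positivity, and the cleanest route is through the spectral decomposition of $\mathcal{H}_g$ combined with the sign of $\mathrm{Re}(\lambda+i\sigma)^{s_k}$ on the closed right half-plane. Ensuring that this estimate is uniform across $k=1,\ldots,N$ and exploits only the positivity $b_k>0$ is the subtle step; the time-localization bookkeeping and the verification that $(u-f)_T$ lies in $\mathbf{H}^{s_N}_{\overline{\Omega_T}}$ are routine once the coercivity is in place.
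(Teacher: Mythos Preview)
Your proposal is correct and follows essentially the same approach as the paper: reduce to homogeneous exterior data, establish boundedness of $B_V$ via \eqref{equ: C-S for H^s}, prove coercivity through the spectral representation and the lower bound $\mathrm{Re}\,(\lambda+i\sigma)^{s_k}\ge \cos(\pi s_k/2)\,|\lambda+i\sigma|^{s_k}$ for $\lambda\ge 0$, then invoke Lax--Milgram for the shifted form and conclude by the Fredholm alternative using the compact embedding $\mathbf{H}^{s_N}_{\overline{\Omega_T}}\hookrightarrow L^2(\Omega_T)$. The only cosmetic difference is that the paper also appeals to the Hardy--Littlewood--Sobolev inequality to absorb the $L^2$ part directly into the coercivity constant, whereas you phrase the same content as a G{\aa}rding inequality with a $-C\|v\|_{L^2}^2$ remainder; both formulations lead to the identical Fredholm conclusion.
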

	\begin{proof}
		Let $v: = (u- f)_T$ and $\widetilde{F}: = F - P_Vf$, then $v\in \mathbf{H}^{s_N}_{\overline{\Omega_T}}$ and $v_T = v$. It suffices to show that for $\widetilde{F}\in(\mathbf{H}^{s_N}_{\overline{\Omega_T}})^*$, there exists a unique solution $v\in \mathbf{H}^{s_N}_{\overline{\Omega_T}}$ such that 
		\[
		B_V(v,w)  = \langle \widetilde{F}, w \rangle_{(\mathbf{H}^{s_N}_{\overline{\Omega_T}})^*\times\mathbf{H}^{s_N}_{\overline{\Omega_T}}},\quad \text{for any } w\in \mathbf{H}^{s_N}_{\overline{\Omega_T}}.
		\]

		Consider the bilinear form 
		\[
		B_V(v,w) + \mu (v,w)_{L^2(\Omega_T)}, 
		\]
		for $\mu \ge \|\min\{V,0\}\|_{L^\infty(\Omega_T)}$ in $\mathbf{H}^{s_N}_{\overline{\Omega_T}}$.
		The boundedness of this bilinear form
		\[
		\begin{split}
			B_V(v,w) + \mu (v,w)_{L^2(\Omega_T)} &\le C \sum_{k = 1}^N\|v\|_{\mathbf{H}^{s_k}(\R^{n+1})} \|w\|_{\mathbf{H}^{s_k}(\R^{n+1})} + C\|v\|_{L^2(\R^{n+1})}\|w\|_{L^2(\R^{n+1})}\\
			&\le C\|v\|_{\mathbf{H}^{s_N}(\R^{n+1})} \|w\|_{\mathbf{H}^{s_N}(\R^{n+1})} \quad \text{for any } v,w\in \mathbf{H}^{s_N}_{\overline{\Omega_T}},
		\end{split}
		\]
		follows directly from \eqref{equ: C-S for H^s}.
		
		We now prove the coercity in the space $\mathbf{H}^{s_N}_{\overline{\Omega_T}}$. Note that for $k = 1, \ldots, N$, we have for $v\in H^{s_N}_{\overline{\Omega_T}}$ that
		\begin{equation}\label{equ: H^{s_k/2}}
			\begin{split}
				\big\langle \mathcal{H}_g^{s_k/2} v, \mathcal{H}_{g,\ast}^{s_k/2} v \big\rangle 
				& =  \int_{\R} \int_0^\infty (\lambda + i\sigma)^{s_k} \, d\| E_\lambda (\mathcal{F}_t v) (\cdot, \sigma)\|^2 \, d\sigma \\
				& =  \int_{\R} \int_0^\infty |\lambda + i\sigma|^{s_k} \left(\cos(s_k\theta) + i \sin(s_k\theta))\right) \, d\| E_\lambda (\mathcal{F}_t v) (\cdot, \sigma)\|^2 \, d\sigma \\
				& = \int_{\R} \int_0^\infty |\lambda + i\sigma|^{s_k} \cos(s_k\theta)  \, d\| E_\lambda (\mathcal{F}_t v) (\cdot, \sigma)\|^2 \, d\sigma, \\
			\end{split}
		\end{equation}
		where $\tan \theta = \sigma/\lambda$ and we utilized the fact that $\sin(s_k\theta)$ is an odd function in the last step. Since $\lambda>0$ implies $\theta\in (-\frac{\pi}{2},\frac{\pi}{2})$, we have for all $s_k\in (0,1)$ that
		\[
		\cos(s_k\theta)\ge \cos\Big(\frac{s_k\pi}{2}\Big)\ge \min_{1\le k\le N} \cos\Big(\frac{s_k\pi}{2}\Big) =: c_s >0.
		\]
		Therefore, using the fact $\mu \ge \|\min\{V,0\}\|_{L^\infty(\Omega_T)}$, equation \eqref{equ: H^{s_k/2}} and the equivalent norm between \eqref{equ: norm equiv} and \eqref{eq:norm}, we obtain 
		\begin{equation}
			\label{equ: coercive-1}
			B_V(v,v)+ \mu (v,v)_{L^2(\Omega_T)}\ge \sum_{k=1}^N b_k \big( \mathcal{H}^{s_k/2}_g v,  \mathcal{H}^{s_k/2}_{g,\ast} v\big)_{\R^{n+1}} \ge C \sum_{k=1}^N \int_{\R^{n+1}} \left|i\rho+ |\xi|^2\right|^{s_k} |\widehat{v}(\xi,\rho)|^2 \,d\xi d\rho.  
		\end{equation}
		Applying the Hardy-Littlewood-Sobolev inequality for the $x$-variable and the fact that $v$ is compactly supported in $x$-variable, it yields for any $s_k\in (0,1)$,
		\begin{equation}\label{equ: Hardy-littlewood}
			\begin{split}
				\int_{\R^{n+1}} \left|i\rho+ |\xi|^2\right|^{s_k} |\widehat{v}(\xi,\rho)|^2 \,d\xi d\rho \ge \int_\R \|(-\Delta_x)^{s_k/2}\mathcal{F}_tv(\cdot,\rho)\|^2_{L^2(\R^n)}\,d\rho \ge C\|v\|^2_{L^2(\R^{n+1})},
			\end{split}
		\end{equation}
		see \cite{LLR2019calder,Banerjee2021Harnack} for a detailed explanation. Coercivity then follows from \eqref{equ: coercive-1} and \eqref{equ: Hardy-littlewood}, that is,  $B_V(v,v)+ \mu (v,v)_{L^2(\Omega_T)}\ge C\|v\|_{\mathbf{H}^{s_N}(\R^{n+1})}^2$.
		
		By Lax--Milgram theorem, there exists a unique solution $v = G_\mu\widetilde{F}\in H^{s_N}_{\overline{\Omega_T}}$ such that 
		\[
		B_V(v,w) + \mu(v,w)_{L^2(\Omega_T)}  = \langle \widetilde{F}, w \rangle_{(\mathbf{H}^{s_N}_{\overline{\Omega_T}})^*\times\mathbf{H}^{s_N}_{\overline{\Omega_T}}},\quad \text{for any} \quad w\in \mathbf{H}^{s_N}_{\overline{\Omega_T}},
		\]
		along with 
		\[
		\|v\|_{H^{s_N}_{\overline{\Omega_T}}} \le C \|\widetilde{F}\|_{(H^{s_N}_{\overline{\Omega_T}})^*}.
		\]
		In particular, $G_\mu : ( \mathcal{H}^{s_N}_{\Omega_T} )^* \to \mathcal{H}^{s_N}_{\Omega_T}$ is bounded and by the compact Sobolev embedding, the operator $G_\mu: L^2(\Omega_T) \to L^2(\Omega_T)$ is compact. Then the spectral theorem implies that the eigenvalues of $G_\mu$ are $\frac{1}{\lambda_j + \mu}$ with $\lambda_j \to +\infty$. Fredholm alternative and the assumption $0$ is not a Dirichlet eigenvalue of $P_V$ ensure the existence and uniqueness of the problem under consideration.
	\end{proof}
	
	\begin{remark}\label{remark of well-posedness}
		The assumption that $\{0\}$ is not a Dirichlet eigenvalue of $P_V$ implies $\{0\}$ is not a Dirichlet eigenvalue of the adjoint of $P_V$.
		Similarly, we can establish the well-posedness result for the adjoint problem to \eqref{equ: wellposedness entangle}. Under the hypothesis of Theorem \ref{thm: well-posedness}, there exists a unique solution $u\in\mathbf{H}^{s_N}(\R^{n+1})$ to the future exterior problem 
		\[
		\begin{cases}
			\big( \sum_{k=1}^N b_k \mathcal{H}_{g,\ast}^{s_k} + V \big)  u =F &\text{ in }\Omega_T, \\
			u=f &\text{ in }(\Omega_e)_T, \\
			u=0 &\text{ in }\R^n \times \{t\ge T\}.
		\end{cases}    
		\]
	\end{remark}

	\subsection{The Dirichlet-to-Neumann map}\label{sec: DN map}
	Based on the well-posedness results of initial exterior problems \eqref{equ: nonlocal para 2}, let us define the corresponding DN maps $\Lambda_V$ by means of the bilinear form $B_V$. 
	
	We first introduce the following quotient spaces for our exterior data by
	\[
	\mathbb{X} : = \mathbf{H}^{s_N}(\R^n\times[-T,T])/\mathbf{H}^{s_N}_{\overline{\Omega_T}}, 
	\]
	equipped with the norm 
	\begin{equation}
		\begin{split}
			\|[f]\|_{\mathbb{X}}&: = \inf_{\phi\in \mathbf{H}^{s_N}_{\overline{\Omega_T}}} \|f+\phi\|_{H^s(\R^n)},\qquad \ \ \,   \text{ for } f\in \mathbf{H}^{s_N}(\R^n\times[-T,T]).
		\end{split}
	\end{equation}
	Denote $\mathbb{X}^*$ as the dual of $\mathbb{X}$. 
	We now define the DN maps as follows:
	\begin{align*}
		\big\langle\Lambda_V [f], [\zeta]\big\rangle_{\mathbb{X}^*\times \mathbb{X}} &: = B_V(u_f, \zeta), &\text{ for } [f],[\zeta]\in \mathbb{X},
	\end{align*}
	where $u_f\in \mathbf{H}^{s_N}(\R^{n+1})$ is the solution of \eqref{equ: nonlocal para 2} with the Dirichlet data $f$.
	
	Analogously, one can also define the adjoint DN maps by utilizing the following natural pairing property
	\begin{align*}
		\big\langle [f], \Lambda_V^*[\zeta]\big\rangle_{\mathbb{X}\times \mathbb{X}^*} &:= \big\langle\Lambda_V [f], [\zeta]\big\rangle_{\mathbb{X}^*\times \mathbb{X}}, &\text{ for } [f],[\zeta]\in \mathbb{X},
	\end{align*}
	Also, the adjoint DN maps can be represented as 
	$$
	\langle [f], \Lambda_V^*[\zeta]\rangle_{\mathbb{X}_1\times \mathbb{X}_1^*} =  B_V(f, u_\zeta),\quad \langle [h], 
	$$
	where $u_\zeta \in \mathbf{H}^{s_N}(\R^{n+1})$ is the solution of the adjoint equation $\big( \sum_{k=1}^N b_k \mathcal{H}_{g,\ast}^{s_k} + V \big)  u_\zeta =0$ with the Dirichlet data $\zeta$ in $(\Omega_e)_T$ and $u_\zeta=0$ for $t\geq T$.
	To simplify the notations, we use $f$ to denote $[f]$. 
	\begin{proposition}\label{prop: DN}
		Let $\Omega$ be a bounded open set in $\R^n$ and $T>0$. Let $N\in \mathbb{N}$. Assume that $\{b_k\}_{k=1}^N\subset (0,\infty)$, $0<s_1<\ldots <s_N<1$, and $V\in L^\infty(\Omega_T)$ such that $0$ is not a Dirichlet eigenvalue of the problem \eqref{equ: wellposedness entangle}. Then the DN map $\Lambda_V$ defined above is well-defined and bounded. 
	\end{proposition}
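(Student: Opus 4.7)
My plan is to verify the two claims — that the pairing $\langle \Lambda_V[f], [\zeta]\rangle_{\mathbb{X}^*\times\mathbb{X}} := B_V(u_f, \zeta)$ descends to a map on equivalence classes and that it is bounded in the quotient norms — by combining the weak formulation of \eqref{equ: nonlocal para 2} with Theorem~\ref{thm: well-posedness}. First I would establish independence of the representative in the $\zeta$-variable: if $\zeta_1 - \zeta_2 \in \mathbf{H}^{s_N}_{\overline{\Omega_T}}$, then $\zeta_1 - \zeta_2$ is an admissible test function for the weak formulation of \eqref{equ: nonlocal para 2} with $F = 0$, which gives $B_V(u_f, \zeta_1 - \zeta_2) = 0$ and hence $B_V(u_f, \zeta_1) = B_V(u_f, \zeta_2)$. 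For the $f$-variable, any two representatives of $[f]$ coincide on $(\Omega_e)_T$ and therefore impose the same exterior Dirichlet data; the uniqueness part of Theorem~\ref{thm: well-posedness} then forces the corresponding solutions $u_{f_1}$ and $u_{f_2}$ to agree.

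Next I would prove boundedness by combining the continuity of $B_V$ with the energy estimate. From \eqref{equ: C-S for H^s} and $V \in L^\infty(\Omega_T)$, one has
$$|B_V(v, w)| \le C\,\|v\|_{\mathbf{H}^{s_N}(\R^{n+1})}\,\|w\|_{\mathbf{H}^{s_N}(\R^{n+1})}, \qquad v, w \in \mathbf{H}^{s_N}(\R^{n+1}),$$
while Theorem~\ref{thm: well-posedness} with $F = 0$ yields $\|u_f\|_{\mathbf{H}^{s_N}(\R^{n+1})} \le C\,\|f\|_{\mathbf{H}^{s_N}(\R^{n+1})}$ for every representative $f$ of $[f]$. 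Using the $\zeta$-independence just proved, for any $\phi \in \mathbf{H}^{s_N}_{\overline{\Omega_T}}$ I may replace $\zeta$ by $\zeta + \phi$ in $B_V(u_f, \zeta)$ without changing its value; passing to the infimum over $\phi$ gives
$$|\langle \Lambda_V[f], [\zeta]\rangle_{\mathbb{X}^*\times\mathbb{X}}| \le C\,\|u_f\|_{\mathbf{H}^{s_N}(\R^{n+1})}\,\|[\zeta]\|_{\mathbb{X}},$$
and then taking the infimum over representatives of $[f]$ in the well-posedness bound — permissible because $u_f$ is invariant under that choice — produces $\|u_f\|_{\mathbf{H}^{s_N}(\R^{n+1})} \le C\,\|[f]\|_{\mathbb{X}}$. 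Combining these yields $\|\Lambda_V[f]\|_{\mathbb{X}^*} \le C\,\|[f]\|_{\mathbb{X}}$, which is the claim.

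The argument is short and essentially mechanical once Theorem~\ref{thm: well-posedness} and the continuity bound \eqref{equ: C-S for H^s} are in hand, so I do not expect a substantive obstacle. The one point meriting care is the order of operations when passing to the quotient norm: the energy estimate must be applied to a fixed representative first, and only afterward may one take the infimum over $\mathbf{H}^{s_N}_{\overline{\Omega_T}}$; this maneuver is legitimate precisely because of the well-definedness verified in the first step, so the two parts of the proof interlock cleanly.
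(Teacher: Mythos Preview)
Your proposal is correct and follows essentially the same route as the paper: well-definedness in the $f$-slot via uniqueness from Theorem~\ref{thm: well-posedness}, well-definedness in the $\zeta$-slot because $B_V(u_f,\psi)=0$ for $\psi\in\mathbf{H}^{s_N}_{\overline{\Omega_T}}$, and boundedness by combining the continuity of $B_V$ with the energy estimate and then taking infima over representatives. Your treatment of the quotient-norm passage is in fact a bit more explicit than the paper's, but the argument is the same.
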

	\begin{proof}
		We first show that $\Lambda_V$ only depends on the equivalence classes. For $f,\,\zeta\in \mathbf{H}^{s_N}(\R^n\times[-T,T])$. Let $\phi,\,\psi\in \mathbf{H}^{s_N}_{\overline{\Omega_T}}$. Since $u_f$ and $u_{f+\phi}$ both solve the equation \eqref{equ: nonlocal para 2} with the same exterior data, Theorem~\ref{thm: well-posedness} implies $u_f = u_{f+\phi}$.
		By the linearity of $B_V$ in the second component, it yields
		\[
		B_V(u_{f+\phi},\zeta+\psi) = B_V(u_f, \zeta+\psi) = B_V(u_f, \zeta)+ B_V(u_f, \psi).
		\]
		Using the fact that $\supp(\psi) \subset \overline{\Omega_T}$ and $u_f$ solves \eqref{equ: nonlocal para 2}, we get $B_V(u_f,\psi) = 0$. This proves that $\langle \Lambda_V (f+\phi), (\zeta+\psi) \rangle =\langle \Lambda_Vf,\zeta \rangle $ and thus $\Lambda_V$ is well-defined.
		
		The boundedness of $\Lambda_V$ follows from 
		\[
		|\langle\Lambda_Vf, \zeta\rangle |\le |B_{V}(u_{f+\phi},\zeta+\psi) | \le C\|u_{f+\phi}\|_{H^{s_N}(\R^{n+1})}\|\zeta+\psi\|_{H^{s_N}(\R^{n+1})} 
		\]
		and taking the infimum with respect to $\phi,\,\psi\in \mathbf{H}^{s_N}_{\overline{\Omega_T}}$.
	\end{proof}

	\section{The entanglement principle}\label{sec: entanglement}	
	The aim of this section is to show the entanglement principles for the fractional parabolic operators on the Euclidean domain. To this end, we first recall the result demonstrated in \cite[Proposition 3.1]{FKU24}, which will play a crucial role of decoupling the mixed fractional parabolic operators later.

	\begin{proposition}[\text{\cite[Proposition 3.1]{FKU24}}]\label{prop: key of entangle}
		Let $N\in \N$ and $\{\alpha_k \}_{k=1}^N \subset (0,\infty) \setminus \N$ satisfy \textbf{Assumption~\ref{exponent condition}}. Given $a>0$, suppose that $\{f_k \}_{k=1}^N\subset C^\infty((0,\infty))$,  there exist positive constants $c$ and $\delta $ such that the function $f=f_k$ ($k=1,\ldots, N$) fulfills 
		\begin{equation}\label{key exponential decay condition}
			\begin{split}
				|f(\tau)|\leq ce^{-\delta \tau }, \quad \tau \in (a,\infty) , \quad \text{and}\quad |f(\tau)|\leq ce^{-\frac{\delta}{\tau}}, \quad \tau \in (0,a].
			\end{split}
		\end{equation}
		Additionally, if there exists $\ell \in \N\cup \{0\}$ such that 
		\begin{equation}\label{FKU key id}
			\begin{split}
				\sum_{k=1}^N \Gamma(m+1+\alpha_k) \int_0^\infty f_k (\tau) \tau^{-m}\, d\tau =0 , \quad \text{for all}\quad m=\ell, \,\ell+1,\,\ell +2 , \ldots,
			\end{split}
		\end{equation}
		then $f_k(\tau)=0$ for all $\tau \in (0,\infty)$, and for all $k=1,\ldots, N$. 
	\end{proposition}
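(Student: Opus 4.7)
The plan is to translate the countable family of moment-type vanishing conditions \eqref{FKU key id} into a single analytic identity in a complex parameter and then exploit the nonresonance hypothesis on $\{\alpha_k\}$ to decouple the contributions of the individual $f_k$. As a preliminary step, the decay estimates \eqref{key exponential decay condition}---super-polynomial decay $e^{-\delta/\tau}$ at $\tau = 0$ and exponential decay $e^{-\delta\tau}$ at $\tau = \infty$---are precisely what is needed to ensure that for each $k$ the Mellin transform
\begin{equation}
F_k(z) := \int_0^\infty f_k(\tau)\, \tau^{z-1}\, d\tau
\end{equation}
defines an entire function of $z \in \C$, with Gamma-type growth along vertical strips. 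The hypothesis can then be rewritten as $\sum_{k=1}^N \Gamma(m+1+\alpha_k)\, F_k(1-m) = 0$ for every integer $m \geq \ell$.

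Next, I would repackage this countable family of scalar identities into a single analytic identity. Using the Newton binomial identity $\sum_{m\geq 0}\frac{\Gamma(m+1+\alpha)}{m!}w^m = \Gamma(1+\alpha)(1-w)^{-1-\alpha}$, the weighted summation of the hypothesis against $(-w)^m/m!$ and interchanging sum and integral (permissible for $w>0$ small thanks to the decay of $f_k$, and then extended by analytic continuation) formally yields
\begin{equation}
\sum_{k=1}^N \Gamma(1+\alpha_k) \int_0^\infty \frac{f_k(\tau)\, \tau^{1+\alpha_k}}{(\tau+w)^{1+\alpha_k}}\, d\tau = 0
\end{equation}
for $w \in \C \setminus (-\infty, 0]$ in the case $\ell = 0$. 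In the case $\ell \geq 1$ the missing initial terms contribute an explicit polynomial in $w^{-1}$ of degree less than $\ell$, which can be eliminated by differentiating $\ell$ times in $w$ (or, equivalently, by dividing by $w^\ell$ and taking a Laurent coefficient). Each summand defines a holomorphic function of $w$ in the cut plane.

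The decoupling step is the heart of the argument. Expanding $(\tau+w)^{-1-\alpha_k} = w^{-1-\alpha_k}(1+\tau/w)^{-1-\alpha_k}$ and estimating remainders via the decay of $f_k$, the $k$-th summand admits an asymptotic expansion as $w \to +\infty$ of the form $w^{-1-\alpha_k} \sum_{j\geq 0} \binom{-1-\alpha_k}{j}\, F_k(2+\alpha_k+j)\, w^{-j}$. By \textbf{Assumption~\ref{exponent condition}}, the exponents $\{-1-\alpha_k-j : 1 \leq k \leq N,\; j \geq 0\}$ are pairwise distinct, so a standard linear-independence argument for non-integer power branches---peeling off the dominant term, subtracting, and iterating---forces $F_k(2+\alpha_k+j) = 0$ for every $k$ and every $j \geq 0$. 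Since each $F_k$ is entire with Gamma-type growth on vertical strips, vanishing on the infinite arithmetic progression $\{2+\alpha_k+j\}_{j \geq 0}$ permits Carlson's theorem to conclude $F_k \equiv 0$; Mellin inversion then gives $f_k \equiv 0$ on $(0,\infty)$.

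The main obstacle is the decoupling step: rigorously extracting the complete asymptotic expansion as $w \to +\infty$, justifying the termwise interchange, providing uniform control of remainders, and cleanly using the nonresonance condition $\alpha_k - \alpha_j \notin \Z$ to disentangle the $N$ distinct power-law sectors. The case $\ell \geq 1$ additionally requires care to ensure that the preliminary $\ell$-fold differentiation in $w$ preserves the asymptotic structure needed for decoupling. Once the vanishing $F_k(2+\alpha_k+j) = 0$ is established, the concluding argument via Carlson's theorem and Mellin inversion is standard.
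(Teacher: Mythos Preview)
The paper does not prove this proposition; it cites \cite[Proposition~3.1]{FKU24} for the case $a=1$ and remarks that the argument there extends verbatim to general $a>0$. Your overall plan---pass from the moment identities to an analytic identity in a complex parameter $w$ via the binomial generating series, then decouple by reading off the distinct non-integer power sectors $w^{-1-\alpha_k-j}$ in the $w\to+\infty$ asymptotics---is the right strategy and is in the spirit of the cited argument.

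There is, however, a genuine gap in your final step. Carlson's theorem requires exponential type in the half-plane, but the only a priori bound on $F_k(2+\alpha_k+x)$ furnished by \eqref{key exponential decay condition} is
\[
|F_k(2+\alpha_k+x)|\le \int_0^\infty |f_k(\tau)|\,\tau^{1+\alpha_k+x}\,d\tau \ \lesssim\ \Gamma(2+\alpha_k+x)\,\delta^{-x},
\]
which is super-exponential (of order $x^x$) as $x\to+\infty$; Carlson therefore does not apply. The repair is immediate and bypasses Carlson entirely: after decoupling you have $\int_0^\infty f_k(\tau)\,\tau^{1+\alpha_k+j}\,d\tau=0$ for every integer $j\ge 0$, i.e., all nonnegative moments of $h_k(\tau):=f_k(\tau)\,\tau^{1+\alpha_k}$ vanish. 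Since $|h_k(\tau)|\le C e^{-\delta\tau}$ for $\tau>a$, the Laplace transform $\int_0^\infty h_k(\tau)\,e^{-s\tau}\,d\tau$ is analytic on $\{\operatorname{Re}s>-\delta\}$ with all derivatives vanishing at $s=0$, hence is identically zero, which forces $f_k\equiv 0$. (A secondary technical point: the interchange that produces the integral identity in $w$ is not a Fubini step, because the binomial series $\sum_m\frac{\Gamma(m+1+\alpha_k)}{m!}(w/\tau)^m$ diverges for $\tau<w$; the clean justification is to observe that the power series $\sum_m\frac{(-w)^m}{m!}\Gamma(m+1+\alpha_k)F_k(1-m)$ has positive radius of convergence, match its Taylor coefficients at $w=0$ with those of $\Gamma(1+\alpha_k)\int_0^\infty f_k(\tau)\,\tau^{1+\alpha_k}(\tau+w)^{-1-\alpha_k}\,d\tau$ by differentiating under the integral, and then analytically continue.)
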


	Note that \cite[Proposition 3.1]{FKU24} shows the case when $a=1$. The same result still holds for any given constant $a>0$ by following the same arguments there.

	\begin{remark}
		Let us emphasize the essential difference of the entanglement principle between \cite{FKU24, FL24} and this work. 
		\begin{enumerate}[(i)]
			\item 	The works \cite{FKU24, FL24} investigate the entanglement principle for nonlocal elliptic operators, and we study an analogous tenet for the nonlocal parabolic operator. Particularly, in \cite{FKU24}, the authors investigated the entanglement principle for fractional Laplace--Beltrami operators on closed Riemannian manifolds. Thanks to the compactness, the first inequality in \eqref{key exponential decay condition} can be achieved naturally by its heat kernel estimate. 
			
			\item In \cite{FL24}, the authors considered the same problem on $\R^n$, for the fractional Laplace operator. Due to the lack of compactness, the first inequality in \eqref{key exponential decay condition} can not be satisfied. Hence, the authors introduced the super-exponential decay condition, allowing them to transfer the problem to the spherical mean vanishing property. 
			In this work, we are in the non-compact setting as well as \cite{FL24}. However, thanks to the representation formulas \eqref{e-semni gp} and \eqref{H^s}, one can introduce a suitable decay condition of $u(x,t)$ with respect to the time variable, so that the first inequality in \eqref{key exponential decay condition} still holds. Given this, one may expect the entanglement principle to hold for the nonlocal parabolic operator.
			
			\item Let us point out that the entanglement principle for fractional Laplace--Beltrami operators $(-\Delta_g)^s$ remains open in the non-compact Euclidean space $\R^n$, which seems to be a challenging problem to resolve.
			
		\end{enumerate}

	\end{remark}

	The following theorem lays the foundation of the proof of Theorem \ref{thm: ent} for $\{\alpha_k \}_{k=1}^N \subset (0,\infty)\setminus\mathbb{N}$. 
	
	\begin{theorem}\label{thm_ent_smooth}
		Let $\mathcal{O}\subset \R^n$ be a nonempty open set for $n\geq 2$. Let $N\in\mathbb{N}$, $T>0$, and $0<s_1 <\ldots <s_N<1$. Suppose that $\{ v_k\}_{k=1}^{N}\subset C^\infty((-\infty,T); \mathcal{S}(\R^n))$ satisfies the following estimates: given any multi-index $\beta=(\beta_0,\beta_1,\ldots,\beta_n) \in \LC \N \cup \{0\} \RC^{n+1}$, there exist positive constants $C_\beta$ and $\delta$ such that
		\begin{equation}\label{solution exponential decay time v_k}
			\begin{split}
				\big|D^\beta_{x,t} v_k(x,t)\big|\leq \begin{cases}
					C_0 \big| \varphi_0(x)\big| e^{ \delta t},& |\beta|=0\\
					C_\beta|\varphi_\beta(x)|,& |\beta|\ge 1
				\end{cases},  \quad \text{for }(x,t)\in \R^n \times \{t\leq-T\},
			\end{split}
		\end{equation}
		and $k=1,\ldots, N$, where 
		$\varphi_\beta \in \mathcal{S}(\R^n)$. If
		\begin{equation}\label{condition_ent_2}
			\left. v_1 \right|_{\mathcal{O}_T} =\ldots = 	\left. v_N\right|_{\mathcal{O}_T} =0 \quad\text{ and }\quad   \bigg(\sum_{k=1}^N \mathcal{H}_g^{s_k} v_k\bigg)\bigg|_{\mathcal{O}_T} =0  
		\end{equation}
		hold, then $v_k\equiv 0$ in $\R^n_T$ for all $k=1,\ldots, N$.
	\end{theorem}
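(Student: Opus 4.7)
The plan is to reduce Theorem~\ref{thm_ent_smooth} to Proposition~\ref{prop: key of entangle} by extracting from the single identity $\sum_k \mathcal{H}_g^{s_k}v_k=0$ on $\mathcal{O}_T$ an infinite family of moment identities indexed by $m\in\N\cup\{0\}$. The key observation is that, because $v_k$ vanishes on the open set $\mathcal{O}_T$ and the integer powers $\mathcal{H}_g^m=(\partial_t-\Delta_g)^m$ are local differential operators, applying $\mathcal{H}_g^m$ to the hypothesis and using the composition identity in \eqref{H^alpha} upgrades it to
\[
\sum_{k=1}^N \mathcal{H}_g^{s_k+m}v_k=0 \quad\text{on }\mathcal{O}_T, \qquad \text{for every } m\in\N\cup\{0\}.
\]

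Fix $(x_0,t_0)\in\mathcal{O}_T$ with $B_\rho(x_0)\subset\mathcal{O}$, and set $F_k(\tau):=e^{-\tau\mathcal{H}_g}v_k(x_0,t_0)$. Writing $\mathcal{H}_g^{s_k+m}v_k=\mathcal{H}_g^{s_k}(\mathcal{H}_g^m v_k)$ and noting that $\mathcal{H}_g^m v_k(x_0,t_0)=0$ again by locality, the Balakrishnan formula \eqref{H^s} combined with the semigroup identity $e^{-\tau\mathcal{H}_g}\mathcal{H}_g^m=(-\partial_\tau)^m e^{-\tau\mathcal{H}_g}$ yields
\[
\mathcal{H}_g^{s_k+m}v_k(x_0,t_0)
= \frac{(-1)^m}{\Gamma(-s_k)}\int_0^\infty \partial_\tau^m F_k(\tau)\,\tau^{-1-s_k}\,d\tau.
\]
Integrating by parts $m$ times (justified below), using $\partial_\tau^m(\tau^{-1-s_k})=(-1)^m \Gamma(m+1+s_k)/\Gamma(1+s_k)\cdot\tau^{-1-s_k-m}$, summing in $k$, and setting $\widetilde F_k(\tau):=F_k(\tau)\,\tau^{-1-s_k}/\big(\Gamma(-s_k)\Gamma(1+s_k)\big)$, one obtains
\[
\sum_{k=1}^N \Gamma(m+1+s_k)\int_0^\infty \widetilde F_k(\tau)\,\tau^{-m}\,d\tau=0 \qquad \text{for all } m\geq 0,
\]
which is precisely the hypothesis of Proposition~\ref{prop: key of entangle} with $\alpha_k=s_k$ and $\ell=0$; the nonresonance $s_k-s_j\notin\Z$ holds automatically for distinct $s_k\in(0,1)$.

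To apply Proposition~\ref{prop: key of entangle}, the decay conditions \eqref{key exponential decay condition} on $\widetilde F_k$ must be verified. For large $\tau$, the semigroup representation $F_k(\tau)=\int p_\tau(x_0,y)v_k(y,t_0-\tau)\,dy$ together with the exponential decay \eqref{solution exponential decay time v_k} for $|\beta|=0$ gives $|F_k(\tau)|\leq C e^{-\delta\tau}$. For small $\tau$, we split the $y$-integral at $|y-x_0|=\rho$: the inner piece vanishes since $(y,t_0-\tau)\in\mathcal{O}_T$ for $\tau\in(0,t_0+T)$, while the outer piece is bounded by $C e^{-c\rho^2/(2\tau)}$ via the Gaussian estimate \eqref{est-heat-kernel} and the Schwartz character of $v_k(\cdot,t_0-\tau)$. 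The same splitting applied to $\partial_\tau^j F_k=(-1)^j\int p_\tau\,\mathcal{H}_g^j v_k\,dy$, now invoking the $|\beta|\geq 1$ case of \eqref{solution exponential decay time v_k}, produces super-exponential decay at $\tau=0$ and boundedness at $\tau=\infty$; combined with the polynomial decay of $\tau^{-1-s_k-k'}$, this makes the boundary terms in each integration by parts vanish. Proposition~\ref{prop: key of entangle} then forces $\widetilde F_k\equiv 0$, hence $F_k\equiv 0$ on $(0,\infty)$, for every $k$.

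Since $(x_0,t_0)\in\mathcal{O}_T$ was arbitrary, $\int p_\tau(x,y)v_k(y,t-\tau)\,dy=0$ for every $\tau>0$, $x\in\mathcal{O}$ and $t\in(-T,T)$. Changing variables to $s=t-\tau$, for each fixed $s\in(-T,T)$ the caloric function $w_k(x,t):=\int p_{t-s}(x,y)v_k(y,s)\,dy$ solves $\mathcal{H}_g w_k=0$ for $t>s$ with initial datum $v_k(\cdot,s)$ and vanishes on the open set $\mathcal{O}\times(s,T)$. Real-analyticity in $x$ of solutions of the heat equation then forces $w_k(\cdot,t)\equiv 0$ on $\R^n$ for every $t\in(s,T)$, and letting $t\to s^+$ yields $v_k(\cdot,s)\equiv 0$. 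Varying $s\in(-T,T)$ gives $v_k\equiv 0$ on $\R^n_T$ for every $k$. The principal technical obstacle is the careful bookkeeping of the decay of $F_k$ and its $\tau$-derivatives at both endpoints, which is needed both to invoke Proposition~\ref{prop: key of entangle} and to justify the repeated integration by parts against the singular weight $\tau^{-1-s_k}$.
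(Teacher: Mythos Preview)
Your reduction to Proposition~\ref{prop: key of entangle} is essentially identical to the paper's: both apply the local operators $\mathcal{H}_g^m$ to upgrade the single identity to an $m$-indexed family, use $e^{-\tau\mathcal{H}_g}\mathcal{H}_g^m=(-\partial_\tau)^m e^{-\tau\mathcal{H}_g}$, integrate by parts $m$ times, and verify the two-sided decay \eqref{key exponential decay condition} via the Gaussian bound \eqref{est-heat-kernel} (near $\tau=0$, using vanishing of $v_k$ on $\mathcal{O}_T$) and the hypothesis \eqref{solution exponential decay time v_k} (near $\tau=\infty$). The paper works on a compactly contained $\omega\Subset\mathcal{O}$ rather than a single point, but this is cosmetic.

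The divergence is in the last step, and there your argument has a gap. From $e^{-\tau\mathcal{H}_g}v_k(x,t)=0$ on $\mathcal{O}_T$ for all $\tau>0$, the paper substitutes back into the Balakrishnan formula \eqref{H^s} to obtain $\mathcal{H}_g^{s_k}v_k=0$ on $\mathcal{O}_T$ and then invokes the known weak UCP for the \emph{nonlocal} operator $\mathcal{H}_g^{s_k}$ (cited from \cite{BS2024calderon,LLR2019calder}). You instead freeze $s=t-\tau$, view $w_k(x,t)=e^{(t-s)\Delta_g}v_k(\cdot,s)(x)$ as a solution of the local heat equation $(\partial_t-\Delta_g)w_k=0$ vanishing on $\mathcal{O}\times(s,T)$, and conclude $w_k\equiv 0$ by ``real-analyticity in $x$''. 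That justification is incorrect under the stated hypotheses: the metric $g$ is only assumed $C^\infty$, and solutions of variable-coefficient parabolic equations with merely smooth coefficients are not real-analytic in the spatial variable in general. Your conclusion is still true, but it requires the space-like unique continuation property for second-order parabolic operators (e.g.\ via Carleman estimates), not analyticity. Either cite that result, or follow the paper's route through the nonlocal UCP, which is already available in the references.
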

	
	\begin{proof}
		Similar to the arguments as in \cite{FKU24,FL24,LLU2022para}, via the condition \eqref{condition_ent_2}, the iteration arguments yield for $m = 1,2,\ldots$ that 
		\begin{equation}
			\mathcal{H}^{m}_gv_1  \big|_{\mathcal{O}_T}=\ldots = \mathcal{H}^{m}_gv_N  \big|_{\mathcal{O}_T}=0, \quad\text{ and }\quad \bigg( \sum_{k = 1}^N \mathcal{H}_g^{s_k} \mathcal{H}_g^m v_k \bigg) \bigg|_{\mathcal{O}_T} = 0. 
		\end{equation}
		Let $\omega\Subset\mathcal{O}$ be an open nonempty subset such that 
		\begin{equation}\label{distance condition}
			\dist (\omega , \R^n\setminus \overline{\mathcal{O}}) \geq 2\kappa ,
		\end{equation}
		for some constant $\kappa>0$.
		For $(x,t)\in \omega_T$, we have by \eqref{H^s} that 
		\begin{align}\label{ID:entangle}
			0 =\sum_{k = 1}^N \mathcal{H}_g^{s_k} \mathcal{H}_g^m v_k (x,t) & = \sum_{k = 1}^N \frac{1}{\Gamma(-s_k)} \int_0^\infty  \LC e^{-\tau \mathcal{H}_g} \mathcal{H}_g^mv_k\RC(x,t)  \frac{d\tau}{\tau^{1+s_k}} \\
			& = \underbrace{\sum_{k = 1}^N \frac{1}{\Gamma(-s_k)} \int_0^\infty  (-1)^m \p_\tau^m \LC e^{-\tau \mathcal{H}_g} v_k\RC (x,t)  \frac{d\tau}{\tau^{1+s_k}}}_{(e^{-\tau \mathcal{H}_g} \mathcal{H}_g^mv_k)(x,t) = (-1)^m \p_\tau^m (e^{-\tau \mathcal{H}_g} v_k)(x,t)}.
		\end{align}
		
		Next, fix $t_0\in (-T,T)$, we shall show no contribution arises at the endpoints when conducting integration by parts in $\tau$. That is, for $\ell = 0,\,1,\ldots, \, m-1$, the following terms 
		\begin{equation}
			\label{eq_boundaries}
			\p_\tau^\ell \left(e^{-\tau \mathcal{H}_g}v_k\right)(x,t_0)\frac{1}{\tau^{s_k+m-\ell}}
		\end{equation}
		vanish at $\tau \to 0^+$ and $\tau \to  +\infty$. 
		Since $t_0-(-T)>0$, let us denote 
		$$
		a:= t_0-(-T)=t_0+T>0.
		$$
		Under this assumption, we have 
		\begin{equation}\label{tau intervals 1}
			t_0 -\tau \in (-\infty,-T) \quad \text{if }\tau \in (a,\infty),
		\end{equation}
		and
		\begin{equation}\label{tau intervals 2}
			t_0-\tau \in [-T,T)  \quad \text{if }\tau \in (0,a].
		\end{equation}
		To show the boundary terms \eqref{eq_boundaries} vanish, it suffices to show the following two estimates. For $\tau\in(a,\infty)$, we have
		\begin{equation}\label{eq_boundary_infty}
			\begin{split}
				\big| \p_\tau^\ell\big( e^{-\tau \mathcal{H}_g} v_k \big) (x,t_0) \big|
				&= \big|\left(e^{-\tau\mathcal{H}_g}\mathcal{H}_g^\ell v_k\right)(x,t_0)\big|\\
				& \leq \underbrace{C_2 \int_{\R^n} \LC \frac{1}{4\pi \tau}\RC^{n/2}e^{-\frac{c_2|x-y|^2}{4\tau}} \big| \mathcal{H}_g^\ell v_k(y,t_0-\tau)\big|\, dy}_{\text{By \eqref{est-heat-kernel}}}\\ 
				&\leq \underbrace{\frac{C}{\tau^{n/2}} \int_{\R^n} e^{-\frac{c_2|x-y|^2}{4\tau}}\big| \varphi_\ell(y)\big|\, dy}_{\text{By \eqref{tau intervals 1} and \eqref{solution exponential decay time v_k} and $\mathcal{H}_g^\ell = (\p_t-\Delta_g)^\ell$}}\\
				&\leq \underbrace{C\big\|\varphi_\ell\big\|_{L^\infty(\R^n)} }_{\int_{\R^n} e^{-\frac{|x|^2}{4\tau}}\,dx = (4 \pi\tau)^{\frac{n}{2}}
				},
			\end{split}
		\end{equation}
		for some $\varphi_\ell\in \mathcal{S}(\R^n)$,
		and thus
		\[
		\big|\p_\tau^\ell \left(e^{-\tau \mathcal{H}_g}v_k\right)(x,t_0)\frac{1}{\tau^{s_k+m-\ell}}\big|\le C\norm{\varphi_\ell}_{L^\infty(\R^n)}\frac{1}{\tau^{s_k+m-\ell}} \to 0,\quad \text{as}\quad \tau\to \infty.
		\]
		Similarly, for $\tau\in(0,a]$, we have  
		\begin{equation}\label{eq_boundary_0}
			\begin{split}
				\big|\p_\tau^\ell \big( e^{-\tau \mathcal{H}_g} v_k \big) (x,t_0) \big| &\leq C_2 \int_{\R^n} \LC \frac{1}{4\pi \tau}\RC^{n/2}e^{-\frac{c_2|x-y|^2}{4\tau}} \big| \mathcal{H}_g^\ell v_k(y,t_0-\tau)\big|\, dy \\
				&\leq \underbrace{\frac{C}{\tau^{n/2}}\int_{\R^n\setminus \overline{\mathcal{O}}} e^{-\frac{c_2|x-y|^2}{4\tau}} \big| \mathcal{H}_g^\ell v_k(y,t_0-\tau)\big|\, dy}_{\text{By \eqref{tau intervals 2} so that $\mathcal{H}_g^\ell v_k(y,t_0-\tau)=0$ for $y\in \mathcal{O}$}} \\
				&\leq \frac{C}{\tau^{n/2}}e^{-\frac{c_2\kappa^2}{\tau}}\sup_{t\in (-T,T)}\left\| \mathcal{H}_g^\ell v_k(\cdot,t) \right\|_{L^1(\R^n\setminus \overline{\mathcal{O}})} \\
				&\leq C e^{-\frac{c}{\tau}},\quad \text{ for }x\in \omega\Subset\mathcal{O},
			\end{split}
		\end{equation}
		which leads to
		\[
		\big|\p_\tau^\ell \left(e^{-\tau \mathcal{H}_g}v_k\right)(x,t_0)\frac{1}{\tau^{s_k+m-\ell}}\big|\le Ce^{-\frac{c}{\tau}}\frac{1}{\tau^{s_k+m-\ell}} \to 0,\quad \text{as}\quad \tau\to 0^+.
		\]
		
		Since we have shown that the boundary values vanish, by applying $m$-times integration by parts in $\tau$ to \eqref{ID:entangle}, we get  
		\begin{align}\label{comp ent 1}
			0&=\sum_{k=1}^N \frac{\gamma_k}{\Gamma(-s_k)}\int_0^\infty \big( e^{-\tau \mathcal{H}_g}v_k\big)(x,t_0) \tau^{-(m+1+s_k)}\,  d\tau\\
			&=\sum_{k=1}^N \Gamma(m+1+s_k)\int_0^\infty f_k(\tau) \tau^{-m}\, d\tau,\quad \text{for any $m\in \N\cup \{0\}$},
		\end{align}
		where  the function
		\begin{equation}\label{comp ent 3}
			f_k(\tau):=\frac{1}{\Gamma(-s_k) \Gamma(1+s_k)}\big( e^{-\tau \mathcal{H}_g}  v_k \big) (x,t_0) \tau^{-(1+s_k)},
		\end{equation}
		and the constant $\gamma_k$ is defined as $\gamma_k:= (1+s_k)(2+s_k)\ldots (m+s_k)= \frac{\Gamma(m+1+s_k)}{\Gamma(1+s_k)}$. Note that the smoothness of $v_k(x,t)$ and $p_\tau(x,y)$ yield $f_k(\tau) \in C^\infty((0,\infty))$.
		
		In addition, we will show that $f_k(\tau)$ satisfies the bound \eqref{key exponential decay condition} so that we will be able to apply Proposition~\ref{prop: key of entangle}. As \eqref{eq_boundary_0} for $\ell = 0$ implies the case $\tau\in (0,a]$, it remains to show the exponential decay in the interval $\tau\in (a,\infty)$. 		
		To this end, by following a similar argument as in \eqref{eq_boundary_infty} and utilizing \eqref{solution exponential decay time v_k} with $|\beta|=0$, it gives rise to the succeeding estimate for $f_k(\tau)$, $\tau\in (a,\infty)$:
		\begin{equation}\label{eq_tau_infty}
			\begin{split}
				\big|f_k(\tau)|
				&\le \big|\left(e^{-\tau\mathcal{H}_g} v_k\right)(x,t_0) \tau^{-(1+s_k)}\big|\\
				& \leq \underbrace{C_2\tau^{-(1+s_k)} \int_{\R^n} \LC \frac{1}{4\pi \tau}\RC^{n/2}e^{-\frac{c_2|x-y|^2}{4\tau}} \big|v_k(y,t_0-\tau)\big|\, dy}_{\text{By \eqref{est-heat-kernel}}}\\ 
				&\leq \underbrace{\frac{C\tau^{-(1+s_k)}}{\tau^{n/2}} e^{\delta(t_0-\tau)} \int_{\R^n} e^{-\frac{c_2|x-y|^2}{4\tau}}\big| \varphi_0(y)\big|\, dy}_{\text{By \eqref{tau intervals 1} and \eqref{solution exponential decay time v_k} }} \\ 
				&\leq \underbrace{C\tau^{-(1+s_k)}e^{\delta(t_0-\tau)}  \big\|\varphi_0\big\|_{L^\infty(\R^n)} }_{\int_{\R^n} e^{-\frac{|x|^2}{4\tau}}\,dx = (4 \pi\tau)^{\frac{n}{2}}
				}\\
				&\leq \underbrace{Ce^{\delta T}e^{-\delta\tau}\norm{\varphi_0}_{L^\infty(\R^n)}}_{\text{since }t_0\in (-T,T)},
			\end{split}
		\end{equation}
		for some $\varphi_0\in \mathcal{S}(\R^n)$.
		With this estimate, we can now apply Proposition \ref{prop: key of entangle} to obtain that $f_k(\tau)$ is identically zero, for $\tau \in (0,\infty)$. Indeed the definition of $f_k(\tau)$ in \eqref{comp ent 3} implies
		\begin{equation}
			\begin{split}
				\big( e^{-\tau \mathcal{H}_g}v_k \big) (x,t_0)=0 \quad \text{for }x\in \omega, \ \tau>0 \text{ and }k=1,\ldots, N.
			\end{split}
		\end{equation}
		Since $t_0\in (-T,T)$ can be arbitrary, we further deduce
		\begin{equation}\label{comp ent 7}
			\begin{split}
				\big( e^{-\tau \mathcal{H}_g}v_k \big) (x,t)=0 \quad \text{for }(x,t)\in \omega_T, \ \tau>0 \text{ and }k=1,\ldots, N.
			\end{split}
		\end{equation}
		Now, since $\omega\Subset \mathcal{O}$ and $\kappa>0$ in \eqref{distance condition} are arbitrarily chosen, as a result, substituting \eqref{comp ent 7} and \eqref{condition_ent_2} into \eqref{H^s}, we have 
		\begin{equation}
			v_k =\mathcal{H}_g^{s_k} v_k=0, \text{ in } \mathcal{O}_T, \text{ for }k=1,\ldots, N.
		\end{equation}	
		Finally, applying the (weak) UCP for nonlocal parabolic operators $\mathcal{H}^s_g$, $s\in (0,1)$ (see \cite{BS2024calderon,LLR2019calder}), we can ensure $v_k\equiv 0 $ in $\R^n_T$, for $k=1,\ldots, N$. This concludes the proof.     
	\end{proof}	
	
	With Theorem \ref{thm_ent_smooth}, we can prove Theorem \ref{thm: ent}.  
	
	\begin{proof}[Proof of Theorem \ref{thm: ent}]
		For $\alpha_k=m_k+s_k\in \R_+\setminus \mathbb{N}$, where $m_k$ is the integer part of $\alpha_k$ and $s_k\in (0,1)$ is the fractional part of $\alpha_k$. In particular, we have $\mathcal{H}^{\alpha_k}_g =\mathcal{H}_g^{m_k+s_k}=\mathcal{H}^{s_k}_g \big( \mathcal{H}^{m_k}_g \big)$. 
		Note that $\{u_k\}_{k=1}^N \subset C^\infty((-\infty,T); \mathcal{S}(\R^n))$, and we let
		$$
		v_k:= b_k\mathcal{H}^{m_k}_g u_k,\quad b_k\in \mathbb{C}\setminus\{0\}, \quad\text{ for } k=1,\ldots, N,
		$$ 
		which is also in $C^\infty((-\infty,T); \mathcal{S}(\R^n))$. Since $\mathcal{H}^{m_k}_g$ is a local operator, the condition \eqref{condition_entanglement_u} implies \eqref{condition_ent_2}, and the condition \eqref{solution exponential decay time} leads to \eqref{solution exponential decay time v_k}. By Theorem \ref{thm_ent_smooth}, we deduce $v_k=0$ in $\R^n_T$, and thus $\mathcal{H}^{m_k}_gu_k=0$ in $\R^n_T$ with $u_k|_{\mathcal{O}_T}=0$. Lastly, the UCP of the classical parabolic operators $\mathcal{H}^{m_k}_g$ in $\R^n_T$ leads to the desired result $u_k=0$ in $\R^n_T$. This proves the assertion.
	\end{proof}
	We conclude this section with some remarks regarding the exponential decay conditions. 
	\begin{remark}~\label{remark:entangle section 3}
		\begin{enumerate}[(i)]
			\item             
			The exponential decay condition \eqref{solution exponential decay time v_k} is needed only for $N\geq 2$ (entangled nonlocal parabolic) in Theorem~\ref{thm_ent_smooth} in order to break the nonlocal effect arising from every fractional operator $\mathcal{H}^{s_k}_g$. When $N=1$ (single $\mathcal{H}^s_g$), however, the UCP holds without such decay condition and it has been shown in the works \cite{BS2024calderon,LLR2019calder,LLU2022para}.
			As a result, \eqref{solution exponential decay time} in Theorem~\ref{thm: ent} can be removed when one considers $N=1$.

			\item Although the decay conditions \eqref{solution exponential decay time} and \eqref{solution exponential decay time v_k} seem strong, for the study of inverse problems, both \eqref{solution exponential decay time} and \eqref{solution exponential decay time v_k} hold automatically provided that the solution of the initial exterior value problem has zero initial data, namely, $u=0$ in $\R^n\times \{t\leq -T\}$, see Section~\ref{sec: IP} for detailed discussions.
		\end{enumerate}
	\end{remark}

	\section{Inverse problems and proof of main results}\label{sec: IP}
	
	\subsection{Global uniqueness for the fractional poly-parabolic operators}
	
	It is known that the proof of uniqueness can be established by employing the UCP together with the Runge approximation property. In what follows, we present an alternative formulation of the entanglement principle by additionally imposing an initial value vanishing condition \eqref{condition in prop ent 2}, which helps to shorten the arguments for the UCP in \cite{LLR2019calder,BS2024calderon}. It is worth mentioning that when we deal with the inverse problem, \eqref{condition in prop ent 2} is fulfilled naturally due to the initial condition in the problem under consideration.  
	
	\begin{proposition}[Modified entanglement principle]\label{prop: entangle in IP}
		Let $\mathcal{O}\subset \R^n$ be a nonempty open set for $n\geq 2$. Let $N\in\mathbb{N}$, $T>0$, $\{b_k\}_{k=1}^N\subset (0,\infty)$, and $0<s_1 <\ldots <s_N<1$. Let $u_k\in \mathbf{H}^{s_k}(\R^{n+1})$, for $k=1,\ldots, N$.
		If 
		\begin{equation}\label{condition in prop ent 2}
			u_1 =\ldots =u_N=0 \text{ in }\R^n\times \{t\leq -T\},
		\end{equation}
		and
		\begin{equation}\label{condition in prop ent 1}
			u_1|_{\mathcal{O}_T}=\ldots = 	u_N|_{\mathcal{O}_T} =\bigg(  \sum_{k=1}^N b_k\mathcal{H}_g^{s_k} u_k\bigg) \bigg|_{\mathcal{O}_T}=0,
		\end{equation} 
		hold, then $u_1=\ldots = u_N=0$ in $\R^n_T$ for all $k = 1,\ldots,N$.
	\end{proposition}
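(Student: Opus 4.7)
The plan is to adapt the proof of Theorem~\ref{thm_ent_smooth}, using the initial-data hypothesis $u_k\equiv 0$ on $\R^n\times\{t\leq -T\}$ in place of the exponential-decay bound \eqref{solution exponential decay time v_k}. The key observation is that the initial condition yields something substantially stronger than mere exponential decay: for $(x,t_0)\in\omega_T$ with $\omega\Subset\mathcal{O}$, the semigroup evaluation $(e^{-\tau\mathcal{H}_g}u_k)(x,t_0)=\int_{\R^n}p_\tau(x,y)u_k(y,t_0-\tau)\,dy$ vanishes identically once $\tau\geq t_0+T$, because then $u_k(\cdot,t_0-\tau)\equiv 0$ on all of $\R^n$. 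Consequently the function $f_k(\tau)$ that will enter Proposition~\ref{prop: key of entangle} has compact $\tau$-support, and the ``large-$\tau$'' condition in \eqref{key exponential decay condition} is satisfied automatically.

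First I would fix $\omega\Subset\mathcal{O}$ with $\dist(\omega,\R^n\setminus\overline{\mathcal{O}})\geq 2\kappa>0$ and a point $(x,t_0)\in\omega_T$. Since $u_k(x,t_0)=0$, the Balakrishnan formula \eqref{H^s} reduces to
$$\mathcal{H}_g^{s_k}u_k(x,t_0)=\frac{1}{\Gamma(-s_k)}\int_0^{t_0+T}(e^{-\tau\mathcal{H}_g}u_k)(x,t_0)\,\frac{d\tau}{\tau^{1+s_k}},$$
and the vanishing conditions further reduce the inner integrand to $\int_{\R^n\setminus\mathcal{O}}p_\tau(x,y)u_k(y,t_0-\tau)\,dy$. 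The Gaussian kernel bound \eqref{est-heat-kernel} combined with $|x-y|\geq\kappa$ produces the small-$\tau$ estimate $|(e^{-\tau\mathcal{H}_g}u_k)(x,t_0)|\leq Ce^{-c\kappa^2/\tau}$ exactly as in \eqref{eq_boundary_0}, and the change of variable $s=t_0-\tau$ additionally shows that $\mathcal{H}_g^{s_k}u_k$ is smooth in $(x,t_0)$ on $\omega_T$. Applying the local, commuting operator $\mathcal{H}_g^m$ to $\sum_k b_k\mathcal{H}_g^{s_k}u_k\equiv 0$ on $\omega_T$, invoking the semigroup identity $\mathcal{H}_g^m e^{-\tau\mathcal{H}_g}=(-1)^m\partial_\tau^m e^{-\tau\mathcal{H}_g}$, and integrating by parts $m$ times in $\tau$ (the boundary terms vanish at $\tau=\infty$ by the compact-support observation, and at $\tau=0^+$ by the $e^{-c\kappa^2/\tau}$ bound applied to all derivatives of $e^{-\tau\mathcal{H}_g}u_k$) yield, for every $m\in\N\cup\{0\}$,
$$\sum_{k=1}^N b_k\,\Gamma(m+1+s_k)\int_0^\infty f_k(\tau)\,\tau^{-m}\,d\tau=0,\qquad f_k(\tau):=\frac{(e^{-\tau\mathcal{H}_g}u_k)(x,t_0)}{\Gamma(-s_k)\Gamma(1+s_k)\,\tau^{1+s_k}}.$$
Absorbing the nonzero constants $b_k$ into $f_k$ and applying Proposition~\ref{prop: key of entangle} then forces $f_k\equiv 0$, so $\mathcal{H}_g^{s_k}u_k|_{\omega_T}=0$. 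Arbitrariness of $\omega$ gives $\mathcal{H}_g^{s_k}u_k|_{\mathcal{O}_T}=u_k|_{\mathcal{O}_T}=0$, and the single-operator UCP from \cite{BS2024calderon,LLR2019calder} concludes $u_k\equiv 0$ in $\R^n_T$ for every $k$.

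The main technical obstacle I anticipate is justifying the smoothness and the differentiation under the integral sign rigorously, since the hypothesis $u_k\in\mathbf{H}^{s_k}(\R^{n+1})$ is substantially weaker than the $C^\infty((-\infty,T);\mathcal{S}(\R^n))$ assumption in Theorem~\ref{thm_ent_smooth}. This is resolved by observing that only the restriction of $u_k$ to $(\R^n\setminus\mathcal{O})\times[-T,T]$ enters the integrals defining $\mathcal{H}_g^{s_k}u_k$ on $\omega_T$, where it is paired with a smooth heat kernel $p_\tau(x,y)$ whose $(x,t_0,\tau)$-derivatives of any order are uniformly dominated by $C_N\tau^{-N}e^{-c\kappa^2/\tau}$ for $|x-y|\geq\kappa$; this local smoothing suffices both to legitimize differentiation under the integral sign and to provide the smoothness of $\mathcal{H}_g^{s_k}u_k$ on $\omega_T$ needed to iterate the classical operator $\mathcal{H}_g^m$.
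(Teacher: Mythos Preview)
Your overall strategy --- exploiting the initial condition $u_k\equiv 0$ on $\{t\leq -T\}$ so that $(e^{-\tau\mathcal{H}_g}u_k)(x,t_0)$ is compactly supported in $\tau$, thereby trivially fulfilling the large-$\tau$ decay in Proposition~\ref{prop: key of entangle} --- is exactly the idea behind the paper's proof. Where your sketch falls short is in the regularity bookkeeping, and this is precisely the point the paper works hardest on.

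The concrete gap lies in the integration by parts in $\tau$ and in the hypothesis $f_k\in C^\infty((0,\infty))$ of Proposition~\ref{prop: key of entangle}. Both require the map
\[
\tau\;\longmapsto\; w_k(x,t_0,\tau):=(e^{-\tau\mathcal{H}_g}u_k)(x,t_0)=\int_{\R^n\setminus\mathcal{O}}p_\tau(x,y)\,u_k(y,t_0-\tau)\,dy
\]
to be smooth in $\tau$. For fixed $(x,t_0)$ the $\tau$-dependence sits in two places: in the heat kernel $p_\tau$, which is indeed smooth, \emph{and} in the time-slice $u_k(\cdot,t_0-\tau)$, which for $u_k\in\mathbf{H}^{s_k}(\R^{n+1})$ is only an a.e.-defined $L^2(\R^n)$-valued function of $t_0-\tau$. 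Hence $w_k(x,t_0,\cdot)$ need not even be continuous in $\tau$, let alone possess the derivatives $\partial_\tau^\ell w_k$ that your boundary-term discussion presupposes. Your final paragraph notes that $p_\tau(x,y)$ is smooth in $(x,t_0,\tau)$ with good bounds for $|x-y|\geq\kappa$; this is true and it does make $\mathcal{H}_g^{s_k}u_k$ smooth in $(x,t_0)$ on $\omega_T$ (via the $(y,s)$-representation with kernel $\chi_{\{s<t_0\}}p_{t_0-s}(x,y)(t_0-s)^{-1-s_k}$), but it does not supply the missing $\tau$-regularity of $w_k$, because the semigroup identity $\mathcal{H}_g^m e^{-\tau\mathcal{H}_g}=(-1)^m\partial_\tau^m e^{-\tau\mathcal{H}_g}$ that you invoke tacitly differentiates $u_k$ in $t$.

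The paper closes this gap by first mollifying in the time variable only: setting $u_{k,\varepsilon}=u_k\ast_t\varphi_\varepsilon$ one obtains $u_{k,\varepsilon}$ smooth in $t$, the operator $\mathcal{H}_g^{s_k}$ commutes with the time-convolution, and all the vanishing hypotheses transfer to $u_{k,\varepsilon}$ on the slightly shrunk set $\mathcal{O}_{T_\varepsilon}$. With this extra smoothness the derivatives $\partial_\tau^\ell(e^{-\tau\mathcal{H}_g}u_{k,\varepsilon})$ exist and can be controlled via Grigor'yan's time-derivative estimates for the heat kernel combined with $\|\partial_t^i u_{k,\varepsilon}(\cdot,t)\|_{L^2}\leq\|u_k\|_{L^2(\R^{n+1})}\|\partial_t^i\varphi_\varepsilon\|_{L^2}$. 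Your argument becomes correct once this mollification step is inserted; after concluding $u_{k,\varepsilon}\equiv 0$ in $\R^n_{T_\varepsilon}$ one lets $\varepsilon\to 0$.
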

	
	\begin{proof} 
		The proof can be reduced to Proposition~\ref{prop: key of entangle} after an appropriate deduction. 
		Note that the functions $u_k \in \mathbf{H}^{s_k}(\R^{n+1})$ are not necessarily smooth, thus Theorem~\ref{thm_ent_smooth} cannot be applied directly to the current setting. 
		To address this lack of smoothness, we utilize certain properties of the heat kernel, combined with a smooth mollifier, which enables us to approximate the functions $u_k$ and thereby overcome the regularity issue.

		To this end, for $\varepsilon>0$, we denote 
		$$
		T_\varepsilon:=T-
		\varepsilon.
		$$
		Consider the one-dimensional standard mollifier $\varphi\in C^\infty_0(\R)$ with compact support $\supp\varphi \subset (-1,1)$, and satisfy $0\leq \varphi$ and $\|\varphi\|_{L^2(\R)}=1$. For each $\varepsilon>0$, we define
		$\varphi_\varepsilon(t) :=\eps^{-1}\varphi(t/\varepsilon)$ and thus $\varphi_\varepsilon\in C^\infty(\R)$ with  $\supp\varphi_\varepsilon \subset (- \varepsilon, \varepsilon)$. For each $x\in \R^n$, since $u_k(x,\cdot)$ is locally integrable in $t$ variable, the function
		\begin{equation}
			u_{k,\varepsilon}(x,t):= ( u_k\ast \varphi_\varepsilon ) (x,t) =\int_{-\varepsilon}^\varepsilon u_k(x,t-\eta) \varphi_\varepsilon(\eta)\, d\eta, \quad t\in (-\infty,T_\varepsilon),
		\end{equation}
		and  $u_{k,\varepsilon}(x,\cdot)\in C^\infty(\R)$. Also, $u_{k,\varepsilon}(x,\cdot)\rightarrow u_k(x,\cdot)$ almost everywhere as $\varepsilon\rightarrow 0$ for $k=1,\ldots, N$. 
		
		Next, for $s\in (0,1)$, recalling the definition \eqref{H^s} and using a direct computation give
		\begin{equation}
			\big( \mathcal{H}^s_gu_{k,\varepsilon}\big)(x,t) =	\big( \mathcal{H}^s_g( u_k\ast \varphi_\varepsilon)\big) (x,t) = \big( (\mathcal{H}^s_g u_k)\ast \varphi_{\varepsilon}  \big) (x,t),\quad(x,t)\in \mathcal{O}_{T_\varepsilon}, 
		\end{equation}
		which can be seen since $\mathcal{H}^s_gu(x,t)$ is defined via a convolution in $t$. This implies
		$$
		\sum_{k=1}^N ((\mathcal{H}_g^{s_k}u_k)\ast \varphi_\varepsilon) (x,t) = \sum_{k=1}^N (\mathcal{H}_g^{s_k}u_{k,\varepsilon})(x,t),\quad (x,t)\in \mathcal{O}_{T_\varepsilon}.
		$$
		Together with \eqref{condition in prop ent 1} and \eqref{condition in prop ent 2}, 
		we get
		\begin{equation}\label{ID:u mollifier_1}
			\begin{alignedat}{2}
				&u_{1,\varepsilon}=\ldots =  u_{N,\varepsilon} =0, &&\text{ in } \left(\mathcal{O}\times (-T_\varepsilon-2\varepsilon,T_\varepsilon)\right)\cup\left(\R^n\times \{t\leq -T_{\varepsilon}-2\varepsilon\}\right),\\
				&\sum_{k=1}^N \mathcal{H}_g^{s_k}u_{k,\varepsilon}=0, &&\text{ in } \mathcal{O}_{T_\varepsilon}.
			\end{alignedat}
		\end{equation}
		Applying $\mathcal{H}^m_g$, $m=1,\,2,\ldots,$ to \eqref{ID:u mollifier_1} leads to 
		\begin{equation}
			\mathcal{H}^m_g u_{1,\varepsilon}|_{\mathcal{O}_{T_\varepsilon}}=\ldots = 	\mathcal{H}^m_g	 u_{N,\varepsilon}|_{\mathcal{O}_{T_\varepsilon}} =\LC\sum_{k=1}^N b_k\mathcal{H}_g^{m+s_k}u_{k,\varepsilon}\RC\bigg|_{\mathcal{O}_{T_\varepsilon}}=0,
		\end{equation}

		Let $\omega\Subset\mathcal{O}$ such that 
		\begin{equation}\label{distance condition_Prop4.1}
			\dist (\omega , \R^n\setminus \overline{\mathcal{O}}) \geq 2\kappa ,
		\end{equation}
		for some constant $\kappa>0$.
		For $(x,t)\in \omega_{T_\varepsilon}$, we have by \eqref{H^s} that  
		\begin{align}\label{ID:entangle_Prop4.1}
			0 =\sum_{k = 1}^N {b_k}\mathcal{H}_g^{s_k} \mathcal{H}_g^m u_{k,\varepsilon} (x,t) & = \sum_{k = 1}^N \frac{{b_k}}{\Gamma(-s_k)} \int_0^\infty  \LC e^{-\tau \mathcal{H}_g} \mathcal{H}_g^m u_{k,\varepsilon}\RC(x,t)  \frac{d\tau}{\tau^{1+s_k}} \\
			& = \sum_{k = 1}^N \frac{{b_k}}{\Gamma(-s_k)} \int_0^\infty  (-1)^m \p_\tau^m \LC e^{-\tau \mathcal{H}_g} u_{k,\varepsilon}\RC(x,t)  \frac{d\tau}{\tau^{1+s_k}}.
		\end{align}

		Next, fix $t_0\in (-T_\varepsilon,T_\varepsilon)$, we shall show no contribution arises at the endpoints when conducting integration by parts in $\tau$. That is, for $\ell = 0,\,1,\ldots, \, m-1$, the following terms 
		\begin{equation}
			\label{eq_boundaries_Prop4.1}
			\p_\tau^\ell \left(e^{-\tau \mathcal{H}_g} u_{k,\varepsilon}\right)(x,t_0)\frac{1}{\tau^{s_k+m-\ell}}
		\end{equation}
		vanish at $\tau \to 0^+$ and $\tau \to  +\infty$. 
		
		Let us denote 
		$$
		a:= t_0-(-T)=t_0+T>0,
		$$
		and split $\tau$ into the following regions:  
		\begin{equation}\label{tau intervals 1-H^s}
			t_0 -\tau \in (-\infty,-T_\varepsilon-2\varepsilon) \quad \text{if }\tau \in (a+\varepsilon,\infty),
		\end{equation}
		and
		\begin{equation}\label{tau intervals 2-H^s}
			t_0-\tau \in [-T_\varepsilon-2\varepsilon,T_\varepsilon)  \quad \text{if }\tau \in (0,a+\varepsilon].
		\end{equation}
		We first show the boundary terms \eqref{eq_boundaries_Prop4.1} vanish when $\tau\to \infty$. For $\tau\in(a+\varepsilon,\infty)$, we have $t_0 -\tau \in (-\infty,-T_\varepsilon-2\varepsilon)$, and  
		\[
		u_{1,\varepsilon}=\ldots =  u_{N,\varepsilon}=0 \text{ in } \R^n\times \{t\leq -T_{\varepsilon}-2\varepsilon\}.
		\]
		Therefore        
		\begin{equation}\label{eq_boundary_infty_Prop4.1}
			\begin{split}
				\big| \p_\tau^\ell\big( e^{-\tau \mathcal{H}_g} u_{k,\varepsilon} \big) (x,t_0) \big| 
				&= \big|\left(e^{-\tau\mathcal{H}_g}\mathcal{H}_g^\ell u_{k,\varepsilon}\right)(x,t_0)\big|\\
				& \leq C_2 \int_{\R^n} \LC \frac{1}{4\pi \tau}\RC^{n/2}e^{-\frac{c_2|x-y|^2}{4\tau}} \big| \mathcal{H}_g^\ell u_{k,\varepsilon}(y,t_0-\tau)\big|\, dy\\
				&=0.
			\end{split}
		\end{equation}

		Now, notice the function $\big( e^{-\tau \mathcal{H}_g} u_{k,\varepsilon} \big) (x,t)$ is $C^\infty$-smooth for $(x,t,{\tau}) \in \R^{n+1}\times (0,\infty)$. This can be seen via the integral formula \eqref{e-semni gp} and the heat kernel $p_\tau(x,y)$ is $C^\infty$-smooth for $(x,y)\in \R^n\times \R^n$ and $\tau>0$ (see \cite{HeatKernelsSpectralTheory}, Chapter 5). Moreover, it is also known that the function $\big( e^{-\tau \mathcal{H}_g} u_{k,\varepsilon} \big) (x,t)$ satisfies 
		\begin{equation}\label{extension heat equation}
			\begin{cases}
				(\p_\tau + \mathcal{H}_g ) \big( e^{-\tau \mathcal{H}_g} u_{k,\varepsilon} \big)(x,t) =0 &\text{ for }(x,t,\tau)\in \R^{n+1}\times \R_+, \\
				\displaystyle \lim_{\tau \to 0^+}\big( e^{-\tau \mathcal{H}_g} u_{k,\varepsilon} \big) (x,t) = u_{k,\varepsilon}(x,t) &\text{ for }(x,t)\in \R^{n+1}, 
			\end{cases}
		\end{equation}
		where the above limit holds in the $L^2$-sense, for all $k=1,\ldots, N$.

		Next, we shall show no contribution arises at $\tau \to 0^+$. 
		For $\tau\in(0,a+\varepsilon]$, we have $t_0-\tau \in [-T_\varepsilon-2\varepsilon,T_\varepsilon) $ and 
		\[
		u_{1,\varepsilon}=\ldots =  u_{N,\varepsilon} =0 \text{ in } \mathcal{O}\times (-T_\varepsilon-2\varepsilon,T_\varepsilon).
		\]
		For $x\in \omega \Subset \mathcal{O}$, using \eqref{extension heat equation} and binomial expansion, the Lebesgue dominated convergence theorem infers that 
		\begin{equation}\label{eq_boundary_0_new1}
			\begin{split}
				\big| \p_\tau^\ell \big( e^{-\tau \mathcal{H}_g} u_{k,\varepsilon} \big) (x,t_0)\big| &=\big|(\p_t -\Delta_g)^\ell\big( e^{-\tau \mathcal{H}_g} u_{k,\varepsilon} \big) (x,t_0)\big| \\
				&=\bigg|  \int_{\R^n}\sum_{i=1}^\ell \left( \begin{matrix}
					\ell \\
					i \end{matrix} \right) \p_t^i (-\Delta_g)^{\ell-i} \big( p_\tau(x,y)u_{k,\varepsilon}\big)(y,t_0-\tau)\, dy \bigg| \\
				& =\bigg| \sum_{i=1}^\ell \left( \begin{matrix}
					\ell \\
					i  \end{matrix} \right) \int_{\R^n} (-\Delta_g)^{\ell-i}p_\tau(x,y)\p_t^i u_{k,\varepsilon}(y,t_0-\tau)\, dy  \bigg|\\
				&=\bigg| \sum_{i=1}^\ell \left( \begin{matrix}
					\ell \\
					i  \end{matrix} \right) \int_{\R^n\setminus \mathcal{O}} (-\Delta_g)^{\ell-i}p_\tau(x,y)\p_t^i u_{k,\varepsilon}(y,t_0-\tau)\, dy  \bigg|\\
				&=\bigg| \sum_{i=1}^\ell \left( \begin{matrix}
					\ell \\
					i  \end{matrix} \right) \int_{\R^n\setminus \mathcal{O}} (-\p_\tau)^{\ell-i}p_\tau(x,y)\p_t^i u_{k,\varepsilon}(y,t_0-\tau)\, dy  \bigg|,
			\end{split}
		\end{equation}
		where we used $p_\tau(x,y)$ is the heat kernel solving the heat equation 
		$$
		(\p_\tau -\Delta_g )p_\tau(x,y)=0,
		$$
		for $x\neq y$, $x,y\in \R^n$ and $\tau>0$ (since $x\in \omega$ and $y\in \R^n\setminus \mathcal{O}$).
		Utilizing \cite[Theorem 3.1]{Grigoryan_upper_heat_kernel}, it is known that the heat kernel satisfies the following time-derivative estimate  
		\begin{equation}\label{time-derivative heat est}
			\begin{split}
				\big|\p_\tau^{\ell-i}p_\tau (x,y)\big| \leq C \frac{(1+\abs{x-y}^2/\tau)^{N'}}{\tau^{\ell-i}\min(\tau, R^2)^l}e^{-\frac{\abs{x-y}^2}{4\tau}},
			\end{split}
		\end{equation}
		for any $\ell\in \N$, $R>0$, $x,y\in \R^n$ and for some constants $C,N',l>0$ {with $N'=\ell-i+l+1$.}
		Thus, inserting \eqref{time-derivative heat est} into \eqref{eq_boundary_0_new1}, we have
		\begin{equation}\label{eq_boundary_0_new2}
			\begin{split}
				\big| \p_\tau^\ell \big( e^{-\tau \mathcal{H}_g} u_{k,\varepsilon} \big) (x,t_0)\big|\leq C \sum_{i=1}^\ell \left( \begin{matrix}
					\ell \\
					i \end{matrix} \right) \int_{\R^n\setminus \mathcal{O}} \frac{(1+\abs{x-y}^2/\tau)^{N'}}{\tau^{\ell-i}\min(\tau, R^2)^l}e^{-\frac{\abs{x-y}^2}{4\tau}} \big| \p_t^i u_{k,\varepsilon}(y,t_0-\tau)\big| \, dy.
			\end{split}
		\end{equation}
		
		Thanks to the bound \eqref{eq_boundary_0_new2}, for $\tau\in(0,a+\varepsilon]$, by the H\"older inequality, we have 
		\begin{equation}\label{eq_boundary_0_new3}
			\begin{split}
				&\quad \,  \int_{\R^n\setminus \mathcal{O}} \frac{(1+\abs{x-y}^2/\tau)^{N'}}{\tau^{\ell-i}\min(\tau, R^2)^l}e^{-\frac{\abs{x-y}^2}{4\tau}} \big| \p_t^i u_{k,\varepsilon}(y,t_0-\tau)\big| \, dy \\
				&\leq Ce^{-\frac{c_0}{\tau}}\sup_{t_0-\tau\in (-T_\varepsilon-2\varepsilon,T_\varepsilon)}\big\| \p_t^i u_{k,\varepsilon}(\cdot,t_0-\tau)\big\|_{L^2(\R^n)}\underbrace{\bigg(\int_{\kappa} ^\infty \frac{(1+\abs{\rho}^2/\tau)^{2N'}}{\tau^{2\ell-2i}\min(\tau, R^2)^{2l}}e^{-\frac{c_1\abs{\rho}^2}{\tau}}\rho^{n-1}\, d\rho \bigg)^{1/2}}_{\text{change of variable }\rho/\sqrt{\tau}\mapsto \rho}\\
				& \le Ce^{-\frac{c_0}{\tau}}\sup_{t_0-\tau\in (-T_\varepsilon-2\varepsilon,T_\varepsilon)}
				\big\| \p_t^i u_{k,\varepsilon}(\cdot,t_0-\tau)\big\|_{L^2(\R^n)} \Bigg(\frac{\tau^{n/2}}{\tau^{2\ell-2i}\min(\tau,R^2)^{2l}}
				\underbrace{ \int_0^\infty (1+\rho^2)^{2N'}\rho^{n-1} e^{-c_1\rho^2}\,d\rho }_{\text{finite}}\Bigg)^{1/2}, 
			\end{split}
		\end{equation}
		and, moreover, the Minkowski inequality and Young's inequality give the following bound
		$$
		\big\| \p_t^i u_{k,\varepsilon}(\cdot,t_0-\tau)\big\|_{L^2(\R^n)}\leq \|u\|_{L^2(\R^{n+1})}\|\p^i_t\varphi_\varepsilon\|_{L^2(\R)}\leq \|u\|_{\mathbf{H}^{s_k}(\R^{n+1})}\|\p^i_t\varphi_\varepsilon\|_{L^2(\R)}
		$$
		for some constants $c_0, c_1,C>0$. Recalling that $u_{k,\eps}(x,t)$ is smooth in $t$, using the estimate \eqref{eq_boundary_0_new3}, one can ensure
		\[
		\big|\p_\tau^\ell \left(e^{-\tau \mathcal{H}_g}u_{k,\varepsilon}\right)(x,t_0)\frac{1}{\tau^{s_k+m-\ell}}\big|\to 0,\quad \text{as}\quad \tau\to 0^+.
		\]

		Since we have shown that the boundary values vanish, by applying $m$-times integration by parts in $\tau$ to \eqref{ID:entangle_Prop4.1}, we get  
		\begin{align}\label{comp ent 1_1}
			0&=\sum_{k=1}^N \frac{\gamma_k}{\Gamma(-s_k)}\int_0^\infty \big( e^{-\tau \mathcal{H}_g}u_{k,\varepsilon}\big)(x,t_0) \tau^{-(m+1+s_k)}\,  d\tau\\
			&=\sum_{k=1}^N \Gamma(m+1+s_k)\int_0^\infty f_{k,\varepsilon}(\tau) \tau^{-m}\, d\tau,\quad \text{for any $m\in \N\cup \{0\}$},
		\end{align}
		where for each fixed $\varepsilon>0$, the function $f_{k,\varepsilon}(\tau)$ is defined as
		\begin{equation}\label{comp ent 3_Prop4.1}
			f_{k,\varepsilon}(\tau):=\frac{{b_k}}{\Gamma(-s_k) \Gamma(1+s_k)}\big( e^{-\tau \mathcal{H}_g}  u_{k,\varepsilon} \big) (x,t_0) \tau^{-(1+s_k)},
		\end{equation}
		and the constant $\gamma_k$ is defined as
		\begin{equation}\label{comp ent 2}
			\gamma_k:=  (1+s_k)(2+s_k)\ldots (m+s_k)= \frac{\Gamma(m+1+s_k)}{\Gamma(1+s_k)}.   
		\end{equation}
		Note that the smoothness of $u_{k,\varepsilon}(x,t)$ in time variable brings out $f_{k,\varepsilon}(\tau) \in C^\infty((0,\infty))$.
		
		In addition, we will show that $f_{k,\varepsilon}(\tau)$ satisfies the bound \eqref{key exponential decay condition} so that we will be able to apply Proposition~\ref{prop: key of entangle}. 
		\begin{itemize}
			\item 	For $\tau\in (a+\varepsilon,\infty)$, $f_{k,\varepsilon}(\tau) = 0$ is implied by \eqref{eq_boundary_infty_Prop4.1} by taking $\ell = 0$.
			
			\item For $\tau\in (0,a+\varepsilon]$, we can derive similarly as in \eqref{eq_boundary_0_new3} by taking $\ell=i=0$ and get
			\[
			|f_{k,\varepsilon}(\tau)|\le Ce^{-\frac{c_2}{\tau}} ,
			\]
			for some constant $c_2>0$.
			
		\end{itemize}
		With this estimate, we can now apply Proposition \ref{prop: key of entangle} to obtain that $f_{k,\varepsilon}(\tau)$ is identically zero, for $\tau \in (0,\infty)$ and for all $k = 1,\ldots, N$. Indeed the definition of $f_{k,\varepsilon}(\tau)$ in \eqref{comp ent 3_Prop4.1} implies
		\begin{equation}
			\begin{split}
				\big( e^{-\tau \mathcal{H}_g}u_{k,\varepsilon} \big) (x,t_0)=0 \quad \text{for }x\in \omega, \ \tau>0 \text{ and }k=1,\ldots, N.
			\end{split}
		\end{equation}
		Since $t_0\in (-T_\varepsilon,T_\varepsilon)$ can be arbitrary, we further deduce
		\begin{equation}\label{comp ent 7_Prop4.1}
			\begin{split}
				\big( e^{-\tau \mathcal{H}_g}u_{k,\varepsilon} \big) (x,t)=0 \quad \text{for }(x,t)\in \omega_{T_\varepsilon}, \ \tau>0 \text{ and }k=1,\ldots, N.
			\end{split}
		\end{equation}
		Now, since $\omega\Subset \mathcal{O}$ and $\kappa>0$ are arbitrarily chosen, we further have 
		\begin{equation}
			u_{k,\varepsilon} =\mathcal{H}_g^{s_k} u_{k,\varepsilon}=0, \text{ in } \mathcal{O}_{T_\varepsilon}, \text{ for }k=1,\ldots, N.
		\end{equation}	
		Applying the (weak) UCP for nonlocal parabolic operators $\mathcal{H}^{s_k}_g$, $s_k\in (0,1)$ (see \cite{BS2024calderon,LLR2019calder}), we can ensure $u_{k,\varepsilon}\equiv 0 $ in $\R^n_{T_\varepsilon}$, for $k=1,\ldots, N$. Then $u_k \equiv 0$ in $\R^n_T$ follows from the fact that $u_{k,\varepsilon}$ converges to $u_k$ almost everywhere as $\varepsilon\to 0$.
		This proves the assertion.
	\end{proof}

	\begin{remark}
		Let us emphasize that the nonlocal operator $\mathcal{H}^s_g$ has constant coefficients in the time variable. This allows the use of a convolution argument to relax the regularity assumptions for certain functions. Consequently, one may weaken the regularity hypotheses in Theorem~\ref{thm: ent}.
	\end{remark}
	
	\begin{remark}\label{remark for entangle}
		The entanglement principle also applies to the adjoint fractional poly-parabolic operator $\sum_{k=1}^N b_k \mathcal{H}^{s_k}_{g,\ast}$. More precisely, if 
		\begin{equation*}
			u_1 =\ldots =u_N=0 \text{ in }\R^n\times \{t\ge T\},
		\end{equation*}
		and
		\begin{equation*}
			u_1|_{\mathcal{O}_T}=\ldots = 	u_N|_{\mathcal{O}_T} = \bigg( \sum_{k=1}^N b_k\mathcal{H}_{g,\ast}^{s_k}u_k\bigg)\bigg|_{\mathcal{O}_T}=0,
		\end{equation*}
		then $u_k \equiv 0$ in $\R^n_T$ for all $k=1,\ldots,N$. The proof proceeds in the same way as that of Proposition~\ref{prop: entangle in IP}, except that one reverses the sign in the $t$-variable.
	\end{remark}
	
	To study the inverse problems, we only need one single function in the entanglement principle to prove our result, i.e., $u:=u_1=\ldots =u_N$ in $\R^{n+1}$. Below, we will apply Proposition~\ref{prop: entangle in IP} to prove the Runge approximation for fractional poly-parabolic operators. 
	
	For $0<s_1<\ldots <s_N<1$, $\{b_k\}_{k=1}^N\subset (0,\infty)$, and $T>0$, we recall the notation $\Omega_T= \Omega\times (-T,T) \subset \R^{n+1}$. Let $V\in L^\infty(\Omega_T)$ satisfy the eigenvalue condition \eqref{eigenvalue condition}. For $f\in \widetilde{\mathbf{H}}^{s_N}((\Omega_e)_T)$, let $u_f\in \mathbf{H}^{s_N}(\R^{n+1})$ solve the problem
	\begin{align}\label{equ in Runge}
		\begin{cases}
			\Big(\sum_{k=1}^N b_k\mathcal{H}_g^{s_k}+V\Big) u_f=0 &\text{ in }\Omega_T, \\
			u_f=f &\text{ in } (\Omega_e)_T, \\
			u_f=0 &\text{ in }\R^n \times \{t\leq -T\}. 
		\end{cases}
	\end{align}
	It is known that $\chi_{(-\infty,T]}(t)u_f(t,x)$ is the unique solution of \eqref{equ in Runge}.

	\begin{lemma}[Runge approximation] \label{lemma: Runge approximation}
		For $n\geq 1$, let $W \subset \Omega_e$ be a nonempty open subset and $T>0$ be a real number. Then the set 
		$$
		\mathcal{R}=\left\{u_f|_{\Omega_T }:\, u_f\ \text{ is the solution to \eqref{equ in Runge}},\ f\in C^\infty_c(W_T) \right\}
		$$
		is dense in $L^2(\Omega_T)$.
	\end{lemma}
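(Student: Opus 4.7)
The strategy is to argue by duality and Hahn--Banach, with the modified entanglement principle (Proposition~\ref{prop: entangle in IP}) -- more precisely its adjoint counterpart (Remark~\ref{remark for entangle}) -- supplying the key unique continuation step. Assume, toward a contradiction, that $\mathcal{R}$ is not dense in $L^2(\Omega_T)$. Then Hahn--Banach furnishes a nonzero $h\in L^2(\Omega_T)$ with
\[
\int_{\Omega_T} u_f\, h\,dx\,dt=0, \quad \text{for every } f\in C_c^\infty(W_T),
\]
and the goal becomes to force $h\equiv 0$.

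First I would introduce an adjoint solution: using the well-posedness of the backward-in-time adjoint problem guaranteed by Remark~\ref{remark of well-posedness}, let $\phi\in \mathbf{H}^{s_N}(\R^{n+1})$ solve
\[
\Big(\sum_{k=1}^N b_k \mathcal{H}_{g,\ast}^{s_k}+V\Big)\phi=h \ \text{ in } \Omega_T,\qquad \phi=0 \ \text{ in }(\Omega_e)_T,\qquad \phi=0 \ \text{ for } t\ge T,
\]
so that $\phi$ is also supported in $\R^n\times[-T,T]$. Next, I would use the duality \eqref{ajoint property} to swap each $\mathcal{H}_{g,\ast}^{s_k}$ onto $u_f$. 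Since $P_V u_f=0$ in $\Omega_T$ and $\phi=0$ in $(\Omega_e)_T$, splitting $\R^{n+1}$ into $\Omega_T$ and $(\Omega_e)_T$ yields the Alessandrini-type identity
\[
\int_{\Omega_T} u_f\, h\,dx\,dt \;=\;-\int_{(\Omega_e)_T} f \cdot \Big(\sum_{k=1}^N b_k \mathcal{H}_{g,\ast}^{s_k}\phi\Big)\,dx\,dt.
\]
The orthogonality hypothesis then gives $\int_{W_T} f\cdot\sum_k b_k\mathcal{H}_{g,\ast}^{s_k}\phi\,dx\,dt=0$ for all $f\in C_c^\infty(W_T)$, hence, by density, $\sum_{k=1}^N b_k \mathcal{H}_{g,\ast}^{s_k}\phi=0$ in $W_T$.

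At this point I would set $u_1=\cdots=u_N=\phi$ and $\mathcal{O}=W$ in the adjoint entanglement principle of Remark~\ref{remark for entangle}: the three required hypotheses are in place, namely $\phi\equiv 0$ in $W_T\subset(\Omega_e)_T$, $\phi\equiv 0$ for $t\ge T$, and the common combination $\sum b_k\mathcal{H}_{g,\ast}^{s_k}\phi$ vanishes in $W_T$. The conclusion is $\phi\equiv 0$ in $\R^n_T$, whence $h\equiv 0$ in $\Omega_T$, contradicting the choice of $h$.

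The delicate step I anticipate is the Alessandrini-type identity above: although the operators $\mathcal{H}_g^{s_k}$ and $\mathcal{H}_{g,\ast}^{s_k}$ are paired cleanly by \eqref{ajoint property}, one must handle the nonlocal tails on $(\Omega_e)_T$ carefully, using the temporal supports of $u_f$ (vanishing for $t\le -T$) and $\phi$ (vanishing for $t\ge T$) in concert with the respective forward/backward semigroups, so that no boundary-in-time contribution survives. Beyond this bookkeeping, the rest of the argument is a clean application of duality plus the entanglement principle, which here plays the role of a unique continuation statement for the sum $\sum_{k=1}^N b_k \mathcal{H}_{g,\ast}^{s_k}$ by being applied with a single common function in each of the $N$ slots.
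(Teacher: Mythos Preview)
Your proposal is correct and follows essentially the same route as the paper: Hahn--Banach reduction to an annihilator $h$ (the paper's $w$), solving the adjoint problem \eqref{equation of phi} with source $h$, the duality computation yielding $\sum_k b_k\mathcal{H}_{g,\ast}^{s_k}\phi=0$ in $W_T$, and then the adjoint entanglement principle (Remark~\ref{remark for entangle}) applied with $u_1=\cdots=u_N=\phi$ to force $\phi\equiv 0$ and hence $h\equiv 0$. The only cosmetic difference is that the paper phrases the Hahn--Banach step directly (show the annihilator is trivial) rather than by contradiction, and it makes explicit that $\phi\equiv 0$ in all of $\R^{n+1}$ before reading off $w=0$ from the equation.
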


	\begin{proof}
		The proof is standard and relies on the Hahn-Banach theorem. It suffices to show that if $(v,w)_{L^2(\Omega_T)}=0$ for all $v\in \mathcal{R}$, then necessarily $w\equiv 0$.  
		To proceed, let $w\in L^2(\Omega_T)$. Assume that
		\begin{align*}
			\left(\chi_{(-\infty,T]}u_f,w\right)_{L^2(\Omega_T)}
			= \left(u_f,w\right)_{L^2(\Omega_T)}=0, \quad \text{for all } f\in C_c^\infty(W_T),
		\end{align*}
		where $\chi_{(-\infty,T]}u_f$ denotes the unique solution of \eqref{equ in Runge} in $\Omega_T$.  
		Here we have used the fact that, as before, the future data does not influence the solution in $\Omega_T$.
		
		Next, let $\phi\in \mathbf{H}^s(\R^{n+1})$ be the solution of 
		\begin{align}\label{equation of phi}
			\begin{cases}
				\Big( \sum_{k=1}^N b_k\mathcal{H}_{g,\ast}^{s_k} + V\Big) \phi=w & \text{in }  \Omega_T,\\
				\phi =0 & \text{in } (\Omega_{e})_T \cup (\R^n \times (\R\setminus (-T,T))),
			\end{cases}
		\end{align}
		where the well-posedness of \eqref{equation of phi} is guaranteed by Remark \ref{remark of well-posedness}.
		Then,
		\begin{align}\label{Runge:phi}
			0=(u_f, w)_{L^2(\Omega_T)} = \Big(u_f -f,  \Big( \sum_{k=1}^N b_k\mathcal{H}_{g,\ast}^{s_k} + V\Big) \phi\Big)_{L^2(\R^n_T)} = -\Big(f,  \sum_{k=1}^N b_k\mathcal{H}_{g,\ast}^{s_k}\phi \Big)_{L^2(W_T) },
		\end{align}	
		for all $f\in C^\infty_c(W_T)$, where in the last identity we used the fact that $f$ is supported in $W_T$ and $u_f$ solves \eqref{equ in Runge}. 
		As \eqref{Runge:phi} holds for all $f\in C^\infty_c(W_T)$, it yields
		$$
		\sum_{k=1}^N b_k\mathcal{H}_{g,\ast}^{s_k} \phi = 0\text{ in } W_T.
		$$
		Combining it with $\phi= 0$ in $W_T$ (from \eqref{equation of phi}),
		we apply the entanglement principle (see Remark \ref{remark for entangle}) to deduce
		$$
		\phi=0 \text{ in }\R^n_T.
		$$
		Moreover, from \eqref{equation of phi} again, the exterior condition of $\phi$ in the past and future time vanish, which implies $\phi\equiv 0 $ in $\R^{n+1}$. Hence we infer that $\mathcal{H}^{s_k}_{g,\ast} \phi=0$ in $\R^{n+1}$, for all $k=1,\ldots, N$. Finally, by substituting this $\phi$ back into \eqref{equation of phi}, we can conclude $w \equiv 0$, which proves the Runge approximation.
	\end{proof}
	
	Before proving Theorem \ref{thm: IP_ent}, we also need the following integral identity.
	
	\begin{lemma}[Integral identity]\label{Lem Integral identitiy}
		Let $\Omega_T\subset \R^{n+1}$ be the bounded open set and let $V_1,\,V_2\in L^\infty(\Omega_T)$ satisfy the eigenvalue condition \eqref{eigenvalue condition}. Then, for any exterior Dirichlet data $f_1,\,f_2\in \widetilde{\mathbf{H}}^{s_N}((\Omega_e)_T)$, we have 
		\begin{align}\label{eq:integral identity}
			\left\langle (\Lambda_{V_1}-\Lambda_{V_2})f_1,f_2\right\rangle_{\mathbf{H}^s((\Omega_e)_T)^\ast \times \mathbf{H}^s((\Omega_e)_T)}=\left((V_1-V_2)u_1,u_2\right)_{\Omega_T},
		\end{align}
		where $u_1\in \mathbf{H}^{s_N}(\R^{n+1})$ is the weak solution of
		\begin{equation}\label{equ u_1 in integral id}
			\begin{cases}
				\big( \sum_{k=1}^N b_k\mathcal{H}^{s_k}_g +V_1 \big) u_1=0 &\text{ in }\Omega_T ,\\
				u_1 = f_1 &\text{ in }(\Omega_e)_T,\\
				u_1=0 &\text{ in }\R^n\times \{t\leq -T\},
			\end{cases}
		\end{equation}
		and $u_2\in \mathbf{H}^{s_N}(\R^{n+1})$ is the weak solution of 
		\begin{equation}\label{equ u_2 in integral id}
			\begin{cases}
				\big( \sum_{k=1}^N b_k \mathcal{H}^{s_k}_{g,\ast} +V_2 \big) u_2=0 &\text{ in }\Omega_T, \\
				u_2=f_2 &\text{ in }(\Omega_e)_T,\\
				u_2=0 &\text{ in }\R^n \times \{t\geq T\}.
			\end{cases}
		\end{equation}
	\end{lemma}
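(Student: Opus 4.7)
The plan is to exploit the symmetry of the sesquilinear form $B_V$ together with the adjoint pairing between $\Lambda_V$ and $\Lambda_V^*$ introduced in Section~\ref{sec: DN map}. The strategy is standard in Calder\'on-type problems: write both $\langle \Lambda_{V_1} f_1, f_2\rangle$ and $\langle \Lambda_{V_2} f_1, f_2\rangle$ as the \emph{same} bilinear expression evaluated on the pair $(u_1,u_2)$, so that subtraction leaves only the zero-order contribution.

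First I would use the definition of the DN map. Since $\Lambda_{V_1}$ is well-defined on equivalence classes in $\mathbb{X}$, any $\mathbf{H}^{s_N}(\R^{n+1})$-extension of $f_2$ may be used; I will choose the extension to be $u_2$ itself, which is legitimate because $u_2|_{(\Omega_e)_T} = f_2$ and $u_2$ vanishes for $t\ge T$. This gives
\begin{equation}
\langle \Lambda_{V_1} f_1, f_2\rangle = B_{V_1}(u_1, u_2).
\end{equation}
For the second term, I invoke the adjoint pairing $\langle \Lambda_{V_2} f_1, f_2\rangle = \langle f_1, \Lambda_{V_2}^\ast f_2\rangle$ together with the representation $\langle f_1, \Lambda_{V_2}^\ast f_2\rangle = B_{V_2}(u_1, u_2)$, where I use $u_1$ as the $\mathbf{H}^{s_N}$-extension of $f_1$. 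Subtracting and observing that the $\sum_k b_k (\mathcal{H}_g^{s_k/2} u_1, \mathcal{H}_{g,\ast}^{s_k/2} u_2)_{L^2(\R^{n+1})}$ pieces of $B_{V_1}$ and $B_{V_2}$ are identical yields
\begin{equation}
\langle (\Lambda_{V_1}-\Lambda_{V_2}) f_1, f_2\rangle = B_{V_1}(u_1,u_2) - B_{V_2}(u_1,u_2) = ((V_1-V_2) u_1, u_2)_{L^2(\Omega_T)},
\end{equation}
as desired.

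The main technical point, which I expect to be the only non-routine step, is justifying the two identities $B_{V_1}(u_1, u_2) = \langle \Lambda_{V_1} f_1, f_2\rangle$ and $B_{V_2}(u_1, u_2) = \langle f_1, \Lambda_{V_2}^\ast f_2\rangle$. Both reduce to showing that replacing $u_2$ by another extension of $f_2$, or $u_1$ by another extension of $f_1$, does not alter the value of the bilinear form. Concretely, one has to verify $B_{V_1}(u_1, u_2 - F_2) = 0$ and $B_{V_2}(u_1 - F_1, u_2) = 0$ for extensions $F_j$; the first vanishes because $u_2 - F_2 \in \mathbf{H}^{s_N}_{\overline{\Omega_T}}$ and $u_1$ is the weak solution of \eqref{equ u_1 in integral id}, while the second vanishes because $u_1 - F_1 \in \mathbf{H}^{s_N}_{\overline{\Omega_T}}$ and $u_2$ solves the adjoint equation \eqref{equ u_2 in integral id}. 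These weak formulations are valid thanks to the mapping and adjoint properties of $\mathcal{H}_g^{s_k}$ and $\mathcal{H}_{g,\ast}^{s_k}$ recorded in \eqref{ajoint property} and \eqref{equ: C-S for H^s}, together with the compatibility of the forward ($t\le -T$) and backward ($t\ge T$) vanishing conditions of $u_1$ and $u_2$, which make the global $L^2$ pairings over $\R^{n+1}$ well defined.
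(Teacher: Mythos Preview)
Your proposal is correct and follows exactly the same route as the paper's proof: split $\langle(\Lambda_{V_1}-\Lambda_{V_2})f_1,f_2\rangle$ as $\langle\Lambda_{V_1}f_1,f_2\rangle-\langle f_1,\Lambda_{V_2}^\ast f_2\rangle$, identify each term with $B_{V_j}(u_1,u_2)$ using $u_2$ (respectively $u_1$) as the extension of $f_2$ (respectively $f_1$), and subtract so that only the potential terms survive. Your additional justification via the well-definedness of $\Lambda_V$ on equivalence classes is precisely the content of Proposition~\ref{prop: DN}.
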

	
	\begin{proof}
		By the adjoint property, the DN map, one has 
		\begin{equation}
			\begin{split}
				&\quad \, \left\langle (\Lambda_{V_1}-\Lambda_{V_2})f_1,f_2\right\rangle_{\mathbf{H}^s((\Omega_e)_T)^\ast \times \mathbf{H}^s((\Omega_e)_T)}\\
				&=\left\langle \Lambda_{V_1}f_1,f_2\right\rangle_{\mathbf{H}^s((\Omega_e)_T)^\ast \times \mathbf{H}^s((\Omega_e)_T)}-\left\langle f_1,\Lambda_{V_2}^*f_2\right\rangle_{_{\mathbf{H}^s((\Omega_e)_T) \times \mathbf{H}^s((\Omega_e)_T)^\ast}} \\
				&=B_{V_1}(u_1,u_2)-B_{V_2}(u_1,u_2)\\
				&=\left((V_1-V_2)u_1|_{\Omega_T},u_2|_{\Omega_T}\right)_{\Omega_T}. 
			\end{split}
		\end{equation}
		This completes the proof.
	\end{proof}

	Now, we can prove Theorem \ref{thm: IP_ent}.
	
	\begin{proof}[Proof of Theorem \ref{thm: IP_ent}]
		We follow the same argument as the proof of \cite[Theorem 1.1]{LLR2019calder}. 	
		If $\left. \Lambda_{V_{1}}f\right|_{(W_{2})_T}=\left.\Lambda_{V_{2}}f\right|_{(W_{2})_T}$ for any $f\in C_{c}^{\infty}((W_{1})_T)$, where $W_{1}$ and $W_{2}$ are nonempty open subsets of $\Omega_{e}$. By the integral identity \eqref{eq:integral identity}, we have 
		\[
		\int_{\Omega_T}(V_{1}-V_{2})u_{1}\,\overline{u_{2}}\, dxdt=0,
		\]
		where $u_{1}, \,u_2\in \mathbf H^{s_N}(\R^{n})$ solve $\big( \sum_{k=1}^N b_k \mathcal{H}^{s_k}_{g } +V_1 \big) u_1=0 $ and $\big( \sum_{k=1}^N b_k \mathcal{H}^{s_k}_{g,\ast} +V_2 \big) u_2=0$ with $u_1=0$ for $\{t\leq -T\}$ and $u_2=0$ for $\{t\geq T\}$. Also, $u_{1}$, $u_2$ have the same exterior value $f \in C_{c}^{\infty}((W_{1})_T)$. 
		
		Given an arbitrary $\phi\in L^{2}(\Omega_T)$ and by using the Runge approximation of Lemma \ref{lemma: Runge approximation}, there exists two sequences of functions $\{ u_{\ell}^{1}\}_{\ell\in \N}$, $\{u_{\ell}^{2}\}_{\ell\in \N}\subset  \mathbf H^{s_N}(\mathbb{R}^{n+1})$ that fulfill
		\begin{align*}
			&\big( \sum_{k=1}^N b_k \mathcal{H}^{s_k}_{g } +V_1 \big) u_{\ell}^{1}=\big( \sum_{k=1}^N b_k \mathcal{H}^{s_k}_{g,\ast} +V_2 \big) u_{\ell}^{2}=0\text{ in }\Omega_T,\\
			& \supp\big(u_{\ell}^{1}\big)\subseteq\overline{(\Omega_{1})_T}, \quad \supp\big(u_{\ell}^{2}\big)\subseteq \overline{(\Omega_{2})_T},\\
			& \left. u_{\ell}^{1}\right|_{\Omega_T}=\phi+r_{\ell}^{1},\quad \left. u_{\ell}^{2}\right|_{\Omega_T}=1+r_{\ell}^{2},
		\end{align*}
		where $\Omega_{1}$, $\Omega_{2}\subset \R^n$ are two open sets containing $\Omega$, and $r_{\ell}^{1},\,r_{\ell}^{2}\to 0$ in $L^{2}(\Omega_T)$ as $\ell\to\infty$. By substituting the solutions $u^j_\ell$ into the integral identity and passing to the limit as $\ell\to\infty$, we infer that 
		\[	
		\int_{\Omega_T}\LC V_{1}-V_{2}\RC \phi \, dxdt=0.
		\]
		As $\phi\in L^{2}(\Omega_T)$ is arbitrary, we can conclude that $V_{1}=V_{2}$ in $\Omega_T$. This completes the proof.
	\end{proof}

	\section*{Statements and Declarations}
	
	\noindent\textbf{Data availability statement.} 
	No datasets were generated or analyzed during the current study.
	
	\medskip 
	
	\noindent\textbf{Conflict of Interests.}  Hereby, we declare there are no conflicts of interest.

	\medskip 
	
	\noindent\textbf{Acknowledgment.} 
	\begin{itemize}
		\item R.-Y. Lai is partially supported by the National Science Foundation through the grant DMS-2306221. 
		\item Y.-H. Lin is partially supported by the Ministry of Science and Technology, Taiwan, under projects 113-2628-M-A49-003 and 113-2115-M-A49-017-MY3. Y.-H. Lin is also a Humboldt research fellow for experienced researchers from Germany.
	\end{itemize}
	
	\bibliography{refs} 

\begin{thebibliography}{BDLCRS21}

\bibitem[AEN20]{AEN2020}
Pascal Auscher, Moritz Egert, and Kaj Nystr\"{o}m.
\newblock {$\rm L^2$} well-posedness of boundary value problems for parabolic
  systems with measurable coefficients.
\newblock {\em J. Eur. Math. Soc. (JEMS)}, 22(9):2943--3058, 2020.

\bibitem[Bal60]{Balakrishnan_1960}
Alampallam~V. Balakrishnan.
\newblock Fractional powers of closed operators and the semigroups generated by
  them.
\newblock {\em Pacific J. Math.}, 10:419--437, 1960.

\bibitem[BDLCRS21]{BDLCS2021harnack}
Animesh Biswas, Marta De~Le\'on-Contreras, and Pablo Ra\'ul~Stinga.
\newblock Harnack inequalities and {H}\"older estimates for master equations.
\newblock {\em SIAM J. Math. Anal.}, 53(2):2319--2348, 2021.

\bibitem[BGMN21]{Banerjee2021Harnack}
Agnid Banerjee, Nicola Garofalo, Isidro~H. Munive, and Duy-Minh Nhieu.
\newblock The {H}arnack inequality for a class of nonlocal parabolic equations.
\newblock {\em Commun. Contemp. Math.}, 23(6):Paper No. 2050050, 23, 2021.

\bibitem[BS24]{BS2024calderon}
Agnid Banerjee and Soumen Senapati.
\newblock The {C}alder\'on problem for space-time fractional parabolic
  operators with variable coefficients.
\newblock {\em SIAM J. Math. Anal.}, 56(4):4759--4810, 2024.

\bibitem[CGRU23]{CGRU2023reduction}
Giovanni Covi, Tuhin Ghosh, Angkana R{\"u}land, and Gunther Uhlmann.
\newblock A reduction of the fractional {C}alder\'on problem to the local
  {C}alder\'on problem by means of the {C}affarelli-{S}ilvestre extension.
\newblock {\em arXiv preprint arXiv:2305.04227}, 2023.

\bibitem[CLL19]{CLL2017simultaneously}
Xinlin Cao, Yi-Hsuan Lin, and Hongyu Liu.
\newblock Simultaneously recovering potentials and embedded obstacles for
  anisotropic fractional {S}chr\"{o}dinger operators.
\newblock {\em Inverse Probl. Imaging}, 13(1):197--210, 2019.

\bibitem[CLR20]{cekic2020calderon}
Mihajlo Cekic, Yi-Hsuan Lin, and Angkana R{\"u}land.
\newblock The {C}alder{\'o}n problem for the fractional {S}chr{\"o}dinger
  equation with drift.
\newblock {\em Cal. Var. Partial Differential Equations}, 59(91), 2020.

\bibitem[CMRU22]{CMRU20}
Giovanni Covi, Keijo M\"{o}nkk\"{o}nen, Jesse Railo, and Gunther Uhlmann.
\newblock The higher order fractional {C}alder\'{o}n problem for linear local
  operators: {U}niqueness.
\newblock {\em Adv. Math.}, 399:Paper No. 108246, 2022.

\bibitem[Dav90]{HeatKernelsSpectralTheory}
Edward~Brian Davies.
\newblock {\em Heat kernels and spectral theory}, volume~92 of {\em Cambridge
  Tracts in Mathematics}.
\newblock Cambridge University Press, Cambridge, 1990.

\bibitem[DNPV12]{DNPV12}
Eleonora Di~Nezza, Giampiero Palatucci, and Enrico Valdinoci.
\newblock Hitchhiker's guide to the fractional {S}obolev spaces.
\newblock {\em Bull. Sci. Math.}, 136(5):521--573, 2012.

\bibitem[Fei24]{Fei24_TAMS}
Ali Feizmohammadi.
\newblock Fractional {C}alder\'on problem on a closed {R}iemannian manifold.
\newblock {\em Trans. Amer. Math. Soc.}, 377(4):2991--3013, 2024.

\bibitem[FGKU25]{feizmohammadi2021fractional}
Ali Feizmohammadi, Tuhin Ghosh, Katya Krupchyk, and Gunther Uhlmann.
\newblock Fractional anisotropic {C}alder\'on problem on closed {R}iemannian
  manifolds.
\newblock {\em J. Differential Geom.}, 131(2):--, 2025.

\bibitem[FKU24]{FKU24}
Ali Feizmohammadi, Katya Krupchyk, and Gunther Uhlmann.
\newblock Calder\'{o}n problem for fractional {S}chr\"{o}dinger operators on
  closed {R}iemannian manifolds.
\newblock {\em arXiv preprint arXiv:2407.16866}, 2024.

\bibitem[FL24]{FL24}
Ali Feizmohammadi and Yi-Hsuan Lin.
\newblock Entanglement principle for the fractional {L}aplacian with
  applications to inverse problems.
\newblock {\em arXiv preprint arXiv:2412.13118}, 2024.

\bibitem[GLX17]{GLX}
Tuhin Ghosh, Yi-Hsuan Lin, and Jingni Xiao.
\newblock The {C}alder\'{o}n problem for variable coefficients nonlocal
  elliptic operators.
\newblock {\em Comm. Partial Differential Equations}, 42(12):1923--1961, 2017.

\bibitem[Gri95]{Grigoryan_upper_heat_kernel}
Alexander Grigoryan.
\newblock Upper bounds of derivatives of the heat kernel on an arbitrary
  complete manifold.
\newblock {\em J. Funct. Anal.}, 127(2):363--389, 1995.

\bibitem[GRSU20]{GRSU20}
Tuhin Ghosh, Angkana R\"{u}land, Mikko Salo, and Gunther Uhlmann.
\newblock Uniqueness and reconstruction for the fractional {C}alder\'{o}n
  problem with a single measurement.
\newblock {\em J. Funct. Anal.}, 279(1):108505, 42, 2020.

\bibitem[GSU20]{GSU20}
Tuhin Ghosh, Mikko Salo, and Gunther Uhlmann.
\newblock The {C}alder\'{o}n problem for the fractional {S}chr\"{o}dinger
  equation.
\newblock {\em Anal. PDE}, 13(2):455--475, 2020.

\bibitem[GU21]{GU2021calder}
Tuhin Ghosh and Gunther Uhlmann.
\newblock The {C}alder\'{o}n problem for nonlocal operators.
\newblock {\em arXiv:2110.09265}, 2021.

\bibitem[HL19]{harrach2017nonlocal-monotonicity}
Bastian Harrach and Yi-Hsuan Lin.
\newblock Monotonicity-based inversion of the fractional {S}chr\"{o}dinger
  equation {I}. {P}ositive potentials.
\newblock {\em SIAM J. Math. Anal.}, 51(4):3092--3111, 2019.

\bibitem[HL20]{harrach2020monotonicity}
Bastian Harrach and Yi-Hsuan Lin.
\newblock Monotonicity-based inversion of the fractional {S}ch\"{o}dinger
  equation {II}. {G}eneral potentials and stability.
\newblock {\em SIAM J. Math. Anal.}, 52(1):402--436, 2020.

\bibitem[Lin24]{lin2024fractional}
Yi-Hsuan Lin.
\newblock The fractional anisotropic {C}alder\'on problem for a nonlocal
  parabolic equation on closed {R}iemannian manifolds.
\newblock {\em arXiv preprint arXiv:2410.17750}, 2024.

\bibitem[LL22]{LL2020inverse}
Ru-Yu Lai and Yi-Hsuan Lin.
\newblock Inverse problems for fractional semilinear elliptic equations.
\newblock {\em Nonlinear Anal.}, 216:Paper No. 112699, 21, 2022.

\bibitem[LL25]{LL25_Integro}
Yi-Hsuan Lin and Hongyu Liu.
\newblock {\em Inverse {P}roblems for {I}ntegro-differential {O}perators},
  volume 222 of {\em Applied Mathematical Sciences}.
\newblock Springer, Cham, 2025.

\bibitem[LLR20]{LLR2019calder}
Ru-Yu Lai, Yi-Hsuan Lin, and Angkana R\"{u}land.
\newblock The {C}alder\'{o}n problem for a space-time fractional parabolic
  equation.
\newblock {\em SIAM J. Math. Anal.}, 52(3):2655--2688, 2020.

\bibitem[LLU22]{LLU2022para}
Ching-Lung Lin, Yi-Hsuan Lin, and Gunther Uhlmann.
\newblock The {C}alder\'{o}n problem for nonlocal parabolic operators.
\newblock {\em arXiv preprint arXiv:2209.11157}, 2022.

\bibitem[LLU23]{LLU2023calder}
Ching-Lung Lin, Yi-Hsuan Lin, and Gunther Uhlmann.
\newblock The {C}alder\'{o}n problem for nonlocal parabolic operators: {A} new
  reduction from the nonlocal to the local.
\newblock {\em arXiv preprint arXiv:2308.09654}, 2023.

\bibitem[LM72]{LM12}
Jacques-Louis Lions and Enrico Magenes.
\newblock {\em Non-homogeneous boundary value problems and applications. {V}ol.
  {II}}, volume Band 182 of {\em Die Grundlehren der mathematischen
  Wissenschaften}.
\newblock Springer-Verlag, New York-Heidelberg, 1972.
\newblock Translated from the French by P. Kenneth.

\bibitem[LNZ24]{LNZ24}
Yi-Hsuan Lin, Gen Nakamura, and Philipp Zimmermann.
\newblock The {C}alder\'on problem for the {S}chr\"odinger equation in
  transversally anisotropic geometries with partial data.
\newblock {\em arXiv preprint arXiv:2408.08298}, 2024.

\bibitem[LZ23]{LZ2023unique}
Yi-Hsuan Lin and Philipp Zimmermann.
\newblock Unique determination of coefficients and kernel in nonlocal porous
  medium equations with absorption term.
\newblock {\em arXiv preprint arXiv:2305.16282}, 2023.

\bibitem[LZ24]{LZ2024approximation}
Yi-Hsuan Lin and Philipp Zimmermann.
\newblock Approximation and uniqueness results for the nonlocal diffuse optical
  tomography problem.
\newblock {\em arXiv preprint arXiv:2406.06226}, 2024.

\bibitem[McL00]{ML-strongly-elliptic-systems}
William McLean.
\newblock {\em Strongly elliptic systems and boundary integral equations}.
\newblock Cambridge University Press, Cambridge, 2000.

\bibitem[MCSA01]{CarracedoSanz2001}
Celso Mart\'inez~Carracedo and Miguel Sanz~Alix.
\newblock {\em The theory of fractional powers of operators}, volume 187 of
  {\em North-Holland Mathematics Studies}.
\newblock North-Holland Publishing Co., Amsterdam, 2001.

\bibitem[RS18]{ruland2018exponential}
Angkana R\"{u}land and Mikko Salo.
\newblock Exponential instability in the fractional {C}alder\'{o}n problem.
\newblock {\em Inverse Problems}, 34(4):045003, 21, 2018.

\bibitem[RS20]{RS20}
Angkana R\"{u}land and Mikko Salo.
\newblock The fractional {C}alder\'{o}n problem: low regularity and stability.
\newblock {\em Nonlinear Anal.}, 193:111529, 56, 2020.

\bibitem[R{\"u}l25]{ruland2023revisiting}
Angkana R{\"u}land.
\newblock Revisiting the anisotropic fractional {C}alder\'on problem.
\newblock {\em Int. Math. Res. Not. IMRN}, (5):Paper No. rnaf036, 28, 2025.

\bibitem[ST17]{ST17_para}
Pablo~Ra\'ul Stinga and Jos\'e{}~L. Torrea.
\newblock Regularity theory and extension problem for fractional nonlocal
  parabolic equations and the master equation.
\newblock {\em SIAM J. Math. Anal.}, 49(5):3893--3924, 2017.

\end{thebibliography}
	
	\bibliographystyle{alpha}
	
\end{document}